\theoremstyle{plain}
\newtheorem{thm}{Theorem}[section]
\newtheorem{lem}[thm]{Lemma}
\newtheorem{cor}[thm]{Corollary}
\newtheorem{rmk}[thm]{Remark}
\numberwithin{equation}{section}
\def\B{B(x_0,R)}
\def\B2T{B(x_0,2R)\times[0,T]}
\def\d{\Delta}
\def\e{\epsilon}
\def\g{\gamma}
\def\M{\mathbf{M^n}}
\def\P{\Phi}
\def\p{\partial_t}
\def\LI{\mathcal{L}}
\def\L{\mathscr{L}}
\def\t{\Delta}
\def\tx{t^{\star}}
\def\x{x^{\star}}
\begin{document}
	
	\title[Differential Harnack inequality]
	{Differential Harnack inequalities for Fisher-KPP \\type equations on Riemannian manifolds}
	
	\author[Zhihao Lu]{Zhihao Lu}
	\address[Zhihao Lu]{School of Mathematics and Statistics, Jiangsu Normal University, Xuzhou 221116, P. R. China}
	\email{lzh139@mail.ustc.edu.cn}

	\begin{abstract}
		We obtain almost optimal differential Harnack inequalities for a class of nonlinear parabolic equations on Riemannian manifolds with Bakry-\'{E}mery Ricci curvature bounded below, which includes the classical Fisher-KPP equation and Newell-Whitehead equation. Compared to existing research, we do not impose any additional conditions on the positive solutions. As its   application, we derive some optimal Liouville properties.
	\end{abstract}
	
	\keywords{Differential Harnack inequality, Fisher-KPP type equation, Liouville property}
	
	\subjclass[2020]{Primary: 58J35,  35B09; Secondary: 35A23, 35B40, 35B53}
	
	\thanks{School of Mathematics and Statistics, Jiangsu Normal University, Xuzhou 221116, P. R. China}
	
	
	\maketitle
	\section{Introduction}
	The following Fisher-KPP equation ($a>0$)
	\begin{equation}\label{kpp}
		\p u=\d u+au(1-u)
	\end{equation}
	is an important model to describe the propagation of an evolutionarily advantageous gene in a population. It was independently researched in Fisher \cite{F} and Kolmogorov-Petrovsky-Piskunov  \cite{KPP}. For this reason,
	it is often referred to in the literature as the Fisher–KPP equation. This equation also has many applications in the branching Brownian motion process, in neuropsychology and in describing certain chemical
	reactions. Generally, the solution $u$ represents a density or  concentration and so we naturally assume that $u$ is positive.

	In this paper, we are considering the Fisher-KPP type equation given by
	\begin{equation}\label{KPP}
		\partial_t u=\Delta_V u+au+bu^p,
	\end{equation}
	which is defined  on a domain of Riemannian manifold. Here $\Delta_Vu:=\d u+Vu$ ($V$ is a smooth vector field on manifold) and $a\ge 0$, $b\le 0$ and $p>1$ are real numbers. When $V$ vanishes, $a=-b>0$ and $p=2$, it reduces to the Fisher-KPP equation; when $a>0$, $b<0$ and $p=3$, it is known as the Newell-Whitehead equation, which includes the parabolic Allen-Cahn equation ($a=-b=1$).  In this paper, using the tool of differential Harnack inequality, we will investigate the behavior of solutions and establish some sharp Liouville properties for equation \eqref{KPP}.
	
	Concretely, we will establish differential Harnack inequalities for equation \eqref{KPP}, which are inspired by Li-Yau's seminal work \cite{LY} on the heat equation. So, these inequalities can be applied to equation \eqref{KPP} on Riemannian manifolds with Bakry-\'{E}mery Ricci curvature bounded below.  Nevertheless, all of the results presented below are new for equation \eqref{KPP} even on Euclidean spaces.
	
	Before we state the main results, let us establish some conventions. Throughout this paper, we always assume that the mentioned Riemannian manifolds  are complete and the mentioned solutions are classical. We will use the notation $B(x,R)$ to denote an open geodesic ball with center $x$ and radius $R$ in the Riemannian manifold. The letter $V$ is always denoted as a smooth vector field in a manifold.  The positive constants that depend on parameters will be denoted by $C(\cdot)$. We define an ancient solution as a solution that satisfies \eqref{KPP} on $\M\times(-\infty,0)$ and an eternal solution as a solution that satisfies \eqref{KPP} on $\M\times\mathbb{R}$. As standard notations, we define
	\begin{equation}
		Ric_V=Ric-\frac{1}{2}\mathcal{L}_V g,
	\end{equation}
	and for any $m>n$ ($n$ is the dimension of the manifold),
	\begin{equation}
		Ric_V^m=Ric_V-\frac{1}{m-n}V^{\flat}\otimes V^{\flat}.
	\end{equation}
Here, $\LI_V g$ is the Lie derivative of the metric with respect to $V$ and  $V^{\flat}$ is the dual 1-form of $V$. We  call $Ric_V$ and $Ric_V^m$ as $\infty$-dimensional and $m$-dimensional Bakry-Émery Ricci curvatures respectively. It is obvious that when $V$ vanishes, $Ric_V$ and $Ric_V^m$ are both Ricci curvature.

	First, we establish a differential Harnack inequality for equation \eqref{KPP} on Riemannian manifolds with Bakry-\'{E}mery Ricci curvature bounded below.  The condition $a>0$ plays a key role in absorbing the  possible positive constant at the right hand of the inequality. 
	
	\begin{thm}\label{dl}
		Let $(\M,g)$ be an n-dimensional Riemannian manifold with $Ric^m_V\ge-Kg$, where $K\ge0$ and $m>n$. Let $u$ be a positive solution of \eqref{KPP} on $B(x_0,2R)\times[0,T]$ with $a>0$, $b\le 0$ and $p>1$, then there exist  $\g=\g(m,p,a,K)> 0$ and $\delta=\delta(m,a,K)<0$ such that
		\begin{equation}\label{13}
			\left(\g\frac{|\nabla u|^2}{u^2}-\frac{u_t}{u}+\delta\left(a+bu^{p-1}\right)\right)(x,t)\le C\left(\frac{1}{t}+\frac{1+\sqrt{K}R}{R^2}\right)
		\end{equation}
		on $B(x_0,R)\times[0,T]$\footnote{Here, at $t=0$, we obey the convention that $\frac{1}{0}=\infty$.}, where $C=C(m,p,a,K)$.
	\end{thm}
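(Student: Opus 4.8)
The plan is to follow the Li–Yau parabolic maximum principle strategy, applied to a carefully chosen combination $F$ of the quantities $\frac{|\nabla u|^2}{u^2}$, $\frac{u_t}{u}$ and $a+bu^{p-1}$, multiplied by a spatial cutoff $\varphi$ supported in $B(x_0,2R)$ and by $t$. Write $f=\log u$, so that $f$ satisfies $f_t=\Delta_V f+|\nabla f|^2+a+bu^{p-1}$. Set $w=a+bu^{p-1}$; since $b\le 0$ and $p>1$ we have $w\le a$, and $\nabla w=b(p-1)u^{p-1}\nabla f$, $\Delta_V w=b(p-1)u^{p-1}\big((p-1)|\nabla f|^2+\Delta_V f\big)$, which are the identities that make the Bochner computation close. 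The candidate quantity is $F=t\varphi\big(\g|\nabla f|^2-f_t+\delta w\big)$ for constants $\g>0$, $\delta<0$ to be chosen; equivalently $F=t\varphi\big((\g-1)|\nabla f|^2-\Delta_V f+(\delta-1)w+\cdots\big)$ after substituting the equation for $f_t$.

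The key steps, in order: (i) derive the evolution inequality for $H:=\g|\nabla f|^2-f_t+\delta w$ under the operator $\p-\Delta_V$, using the Bochner formula $\Delta_V|\nabla f|^2\ge \frac{2}{m}(\Delta_V f)^2+2\langle\nabla f,\nabla\Delta_V f\rangle-2K|\nabla f|^2$ (the $m$-dimensional Bakry–Émery version, which is exactly where $Ric^m_V\ge -Kg$ enters), together with the commutation $\p\Delta_V f=\Delta_V f_t$ and the expressions above for $\nabla w,\Delta_V w$; (ii) bound the resulting good negative terms — a multiple of $(\Delta_V f)^2$ and a multiple of $|\nabla f|^4$ — against the bad terms coming from $K$, from the cutoff derivatives $|\nabla\varphi|^2/\varphi$, $\Delta_V\varphi$, and crucially from the reaction term, where the sign conditions $a>0$, $b\le 0$ produce, for suitable $\delta<0$, a term like $\delta(p-1)(-b)u^{p-1}|\nabla f|^2$ with a favorable sign plus a constant-order term $\propto a\,w$ that can be absorbed because $w\le a$ and $a>0$; (iii) evaluate at a first maximum point of $F$ on $\overline{B(x_0,2R)}\times[0,T]$, where $\nabla F=0$, $\Delta_V F\le 0$, $F_t\ge 0$, convert the resulting algebraic inequality (quadratic in $F$ after using $t|\nabla f|^2\le \text{(stuff)}$ and Young's inequality) into an upper bound $F\le C\big(1+\sqrt{K}R+R^2/T\,?\big)\cdot$; (iv) unwind the cutoff ($\varphi\equiv 1$ on $B(x_0,R)$) and divide by $t$ to get \eqref{13} with $C=C(m,p,a,K)$, $\g=\g(m,p,a,K)$, $\delta=\delta(m,a,K)$.

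For step (i) and the choice of constants it is standard to take $\varphi$ a function of $d(x_0,x)/R$ with $|\nabla\varphi|^2\le C\varphi/R^2$ and $\Delta_V\varphi\ge -C(1+\sqrt{K}R)/R^2$ (the Laplacian comparison for $\Delta_V$ under $Ric^m_V\ge -Kg$, treating $\M$ as if it had dimension $m$), and to pick $\g$ slightly larger than $1$ and $\delta$ slightly negative so that, after the Bochner step, the coefficient of $(\Delta_V f)^2$ stays positive ($\gamma>1$ type condition with an $m$-dependent margin) while the reaction contributions acquire the right sign.

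The main obstacle I expect is \textbf{step (ii)}: keeping track of the nonlinear reaction term $bu^{p-1}$ through the Bochner computation and arranging the free parameters so that (a) the $|\nabla f|^4$ and $(\Delta_V f)^2$ terms remain strong enough to dominate everything via Cauchy–Schwarz, (b) the genuinely bad term $\delta b(p-1)u^{p-1}\Delta_V f$ (which has no definite sign and involves the unbounded quantity $u^{p-1}$) is absorbed — this is presumably where one must also carry a small multiple of $w$ or $u^{p-1}|\nabla f|^2$ inside $H$, and where the restriction $\delta=\delta(m,a,K)$ (not depending on $p$ or $b$) versus $\g=\g(m,p,a,K)$ (depending on $p$) comes from — and (c) the leftover constant-order contribution, proportional to $a$ times a bounded multiple of $w$, is swallowed using $a>0$ exactly as the remark before the theorem advertises. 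Handling the possible non-smoothness of $d(x_0,\cdot)$ at the cut locus is routine via the Calabi trick and I would only mention it.
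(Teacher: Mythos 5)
Your overall strategy matches the paper's: set $f=\log u$, form the Li--Yau quantity $H=\gamma|\nabla f|^2-f_t+\delta\left(a+bu^{p-1}\right)$, apply the $m$-dimensional Bakry--\'Emery Bochner formula to obtain a differential inequality for $H$, multiply by $t\varphi$ with $\varphi$ a Bakry--Qian cutoff, and run the maximum-principle argument; you also correctly identify that $\delta<0$ provides the right sign on the reaction terms and that $a>0$ is what ultimately absorbs the leftover constants. The algebraic identities you write for $\nabla w$ and $\Delta_V w$ are the ones used in the paper's Lemma~2.1.

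There is, however, a genuine error in the parameter choice: you take $\gamma$ slightly larger than $1$, but the argument requires $\gamma\in(0,1)$. After substituting $\Delta_V f=(\gamma-1)|\nabla f|^2-H+(\delta-1)w$ into $\frac{2\gamma}{m}(\Delta_V f)^2$ one gets a cross term $-\frac{4\gamma}{m}(\gamma-1)|\nabla f|^2 H$, which is favorable (nonnegative at a positive maximum of $t\varphi H$) only when $\gamma<1$; this very term is then what absorbs, via Cauchy--Schwarz, the bad cutoff cross term $2t\langle\nabla f,\nabla\varphi\rangle H$, and that absorption needs $\gamma(1-\gamma)>0$. More decisively, the curvature contribution $-2\gamma K|\nabla f|^2$ is cancelled precisely by the term $\frac{4}{m}a\gamma(1-\gamma)(1-\delta)|\nabla f|^2$ produced by the reaction under the condition $\frac{2a}{m}(1-\gamma)(1-\delta)\ge K$: since $\delta<0$ forces $1-\delta>0$, and $a,K\ge0$, this is impossible when $\gamma>1$. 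Taking $\gamma>1$ would therefore leave an uncancelled $-c|\nabla f|^2$ that can only be Young-absorbed into $|\nabla f|^4$ at the price of an additive constant on the right-hand side, which would destroy the $\frac{1}{t}$ decay (and with it the Liouville corollaries). The likely source of the confusion is the convention: in Li--Yau's $F=t(|\nabla f|^2-\alpha f_t)$ one indeed takes $\alpha>1$, but here the free coefficient $\gamma$ multiplies $|\nabla f|^2$ rather than $f_t$, so the dictionary is $\gamma=1/\alpha<1$. Your remark that the ``coefficient of $(\Delta_V f)^2$'' must stay positive is also a red herring: that coefficient is $\frac{2\gamma}{m}$, positive for any $\gamma>0$; the constraint actually lives in the signs of the cross terms above. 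With $\gamma\in(0,1)$ and $\delta<0$ chosen exactly so that those cross terms and the curvature/reaction cancellation have the right sign, the rest of your sketch (steps (iii)--(iv) and the Calabi trick) goes through as in the paper.
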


	Therefore, we obtain the optimal upper bound for the positive ancient or eternal solutions of \eqref{KPP} on Riemannian manifolds with Bakry-\'{E}mery Ricci curvature bounded below.
	\begin{cor}\label{c12}
		Let $(\M,g)$ be an n-dimensional Riemannian manifold with $Ric^m_V\ge-Kg$, where $K\ge 0$ and $m>n$. If $u$ is a positive ancient solution of  \eqref{KPP} with $a>0$, $b<0$ and $p>1$, then $\limsup\limits_{t\to-\infty}u(x,t)\le\left(-\frac{a}{b}\right)^{\frac{1}{p-1}}$. Moreover, if $u$ is a positive eternal solution of  \eqref{KPP}, then $u\le\left(-\frac{a}{b}\right)^{\frac{1}{p-1}}$.
	\end{cor}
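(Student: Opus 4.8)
The plan is to strip the right-hand side of \eqref{13} by letting the parabolic cylinder exhaust the manifold, and then to run a one-dimensional comparison against the reaction ODE. Fix a point $(x,t)$ in the domain of $u$, and for parameters $R,S>0$ put $\tilde u(y,\tau):=u(y,\tau+t-S)$ on $B(x,2R)\times[0,S]$; since $u$ is ancient (so $t<0$) or eternal ($t$ arbitrary), this $\tilde u$ is a positive classical solution of \eqref{KPP} on that cylinder. Applying Theorem~\ref{dl} to $\tilde u$ with $T=S$ and evaluating at the time-endpoint $(x,S)$, which corresponds to $(x,t)$ for $u$, gives
\[
\left(\g\frac{|\nabla u|^2}{u^2}-\frac{u_t}{u}+\delta\left(a+bu^{p-1}\right)\right)(x,t)\le C\left(\frac{1}{S}+\frac{1+\sqrt{K}R}{R^2}\right).
\]
Sending $R\to\infty$ and then $S\to\infty$ annihilates the right-hand side; since $\g>0$, discarding the gradient term yields the pointwise inequality $u_t\ge \delta u\left(a+bu^{p-1}\right)=:f(u)$ at every point of the domain of $u$.

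Next I would record the elementary behaviour of the scalar function $f(v)=\delta a v+\delta b v^{p}$. Because $\delta<0$, $a>0$ and $b<0$, we have $\delta a<0<\delta b$, so $f$ vanishes precisely at $0$ and at $v_\ast:=\left(-a/b\right)^{1/(p-1)}$, is negative on $(0,v_\ast)$, positive on $(v_\ast,\infty)$, and behaves like $\delta b\,v^{p}$ as $v\to\infty$. Hence for every $c>v_\ast$ the integral $\tau_c:=\int_c^{\infty}dv/f(v)$ is finite, and the solution of $w'=f(w)$ with $w(t_1)=c$ is increasing and blows up at the finite time $t_1+\tau_c$.

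The conclusion then follows by comparison along vertical lines. Fix $x$ and set $g(t):=u(x,t)$, so that $g'\ge f(g)$. Suppose $g(t_1)>v_\ast$ at some $t_1$ in the domain of $u$. Since $f$ is locally Lipschitz, the standard ODE comparison principle gives $g\ge w$ on $[t_1,t_1+\tau_{g(t_1)})$, where $w'=f(w)$, $w(t_1)=g(t_1)$; as $w$ blows up at $t_1+\tau_{g(t_1)}$, so would $g$, provided that time still lies in the domain. For an eternal solution the domain is all of $\mathbb R$, so this is an immediate contradiction and therefore $u\le v_\ast$ on $\M\times\mathbb R$. For an ancient solution, if $\limsup_{t\to-\infty}u(x_0,t)>v_\ast$, then choosing $c$ strictly between $v_\ast$ and this value and a sequence $t_k\to-\infty$ with $u(x_0,t_k)>c$, the forced blow-up time of $t\mapsto u(x_0,t)$ is at most $t_k+\tau_c$, which is negative once $k$ is large — contradicting that $u$ is a finite classical solution on $\M\times(-\infty,0)$. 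Hence $\limsup_{t\to-\infty}u(x_0,t)\le v_\ast$.

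The only delicate points are bookkeeping: verifying that the time-translated restriction genuinely satisfies the hypotheses of Theorem~\ref{dl} on $B(x,2R)\times[0,S]$, so that one is entitled to send $S\to\infty$; and, in the ancient case, exploiting that $\tau_c$ is independent of $k$ in order to push the forced blow-up time below $0$. The ODE comparison and the finite-time blow-up of $w$ are routine once the shape of $f$ is understood.
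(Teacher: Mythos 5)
Your proof is correct and follows essentially the same route as the paper: send $R\to\infty$ and the time parameter to $\infty$ in Theorem~\ref{dl} to obtain the pointwise differential inequality $u_t\ge\delta(au+bu^p)$, then compare against the reaction ODE. You merely make explicit the finite-time blow-up of the comparison ODE above the equilibrium $\left(-a/b\right)^{1/(p-1)}$, which is the content behind the paper's brief assertion that a suitable $\tau(\varepsilon,\delta,b,p)<0$ exists.
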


Therefore, Corollary \ref{c12} can be applied to equation \eqref{KPP} on Riemannian manifolds with $\infty$-dimensional Bakry-\'{E}mery Ricci curvature bounded below satisfying that $L^{\infty}$ norm of $V$ is finite. Specially,  for equation \eqref{KPP} on Euclidean spaces, Corollary \ref{c12} is suitable for vector satisfying that $L^{\infty}$ norm of $V$ and $\nabla V$  are finite.

	\begin{rmk}\rm 
		To best of our knowledge, the optimal bound in Corollary \ref{c12} is new even for \eqref{KPP} on the Euclidean spaces. Therefore, any positive solutions of elliptic equation of \eqref{KPP} ($a>0$ and $b<0$) on complete Riemannian manifolds with Ricci curvature bounded below must have upper bound $\left(\frac{a}{b}\right)^{\frac{1}{p-1}}$, and so any positive solutions of elliptic equation of \eqref{KPP} on closed manifolds must be constant. 
	\end{rmk}

	If $a=0$ and $b<0$, under Bakry-\'{E}mery Ricci non-negative condition,  we have 
	\begin{thm}\label{dl0}
		Let $(\M,g)$ be an n-dimensional Riemannian manifold with $Ric^m_V\ge0$ and $m>n$. Let $u$ be a positive solution of \eqref{KPP} on $B(x_0,2R)\times[0,T]$ with $a=0$, $b\le0$ and $p>1$, then there exists  $\g=\g(m,p)\in(0,1)$ such that
		\begin{equation}\label{15}
			\left(\g\frac{|\nabla u|^2}{u^2}-\frac{u_t}{u}-bu^{p-1}\right)(x,t)\le C\left(\frac{1}{t}+\frac{1}{R^2}\right)
		\end{equation}
		on $B(x_0,R)\times[0,T]$, where $C=C(m,p)$.
	\end{thm}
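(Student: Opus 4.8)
The plan is to mimic the classical Li--Yau argument adapted to the nonlinear term, following the same scheme that presumably underlies Theorem \ref{dl} but now exploiting the scale invariance that appears when $a=0$. Set $f=\log u$, so that $f$ solves $\partial_t f=\Delta_V f+|\nabla f|^2+bu^{p-1}$, and introduce the quantity
\[
  F=t\left(\g|\nabla f|^2-f_t-bu^{p-1}\right)=t\left(\g\frac{|\nabla u|^2}{u^2}-\frac{u_t}{u}-bu^{p-1}\right).
\]
Because $b\le0$ and $p>1$, the term $-bu^{p-1}\ge0$ contributes with a favourable sign, which is exactly what lets us avoid any lower bound on $u$. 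First I would compute $(\Delta_V-\partial_t)F$ using the Bochner formula with the $Ric_V^m$-version of the improved Bochner inequality, i.e. $\Delta_V|\nabla f|^2\ge \tfrac{2}{m}(\Delta_V f)^2+2\langle\nabla f,\nabla\Delta_V f\rangle-2K|\nabla f|^2$, here with $K=0$. The nonlinear term produces extra contributions $\nabla(bu^{p-1})$ and its Laplacian; the key algebraic point is to choose $\g=\g(m,p)\in(0,1)$ so that, after completing the square, all the $u^{p-1}$-cross terms and the $|\nabla f|^4$ terms combine into a non-positive (or controllably small) expression. This is where the restriction $p>1$ and the precise admissible range of $\g$ enter.

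Next, I would run the localization: multiply $F$ by a cutoff $\varphi=\varphi(d(x_0,x)/R)$ that is $1$ on $B(x_0,R)$, supported in $B(x_0,2R)$, with the usual bounds $|\nabla\varphi|^2/\varphi\le C/R^2$ and $\Delta_V\varphi\ge -C(1+\sqrt K R)/R^2$ (the $V$-contribution to the Laplacian comparison is controlled by $Ric_V^m\ge0$; since $K=0$ here this simply reads $-C/R^2$). At an interior space-time maximum point of $\varphi F$ on $B(x_0,2R)\times[0,T]$ — which exists by compactness once one handles the mild non-smoothness of the distance function in the standard Calabi way — one has $\nabla(\varphi F)=0$ and $(\Delta_V-\partial_t)(\varphi F)\le0$. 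Feeding the differential inequality for $F$ into this, substituting $\nabla F=-(F/\varphi)\nabla\varphi$, and using Young's inequality to absorb the gradient cross terms, I expect to arrive at an inequality of the form $(\varphi F)^2\le C\big(1+ (\varphi F)\cdot t\cdot(\text{curvature/cutoff terms})\big)$ at the maximum, which yields $\varphi F\le C(m,p)\,(1+t/R^2)$ there, hence on all of $B(x_0,R)\times[0,T]$, and dividing by $t$ gives \eqref{15}.

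The main obstacle is the algebra in the first step: one must verify that there is a genuinely nonempty interval of admissible $\g\in(0,1)$ for which the quadratic form in the variables $(\Delta_V f,\,|\nabla f|^2,\,u^{p-1})$ arising from $(\Delta_V-\partial_t)F$ is dominated by $-c F^2/t$ plus error terms that the cutoff machinery can swallow. The sign $b\le0$ makes the $-bu^{p-1}$ term an ally rather than an obstruction, but one still has to check the cross term between $\nabla f$ and $\nabla(u^{p-1})=(p-1)u^{p-1}\nabla f$, which multiplies $|\nabla f|^2$ by $u^{p-1}$ with a sign that must be controlled; this is precisely why $\g$ is forced to be strictly less than $1$ and to depend on $p$. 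A secondary technical point, entirely routine, is that this proof is really the $K=0$, $a=0$ specialization of the computation behind Theorem \ref{dl}, so much of it can be quoted: one sets $a=0$ and $K=0$ in that argument, observes that the term previously absorbed by $a>0$ is now simply absent (there is no positive constant on the right side to absorb, only the $1/t$ and $1/R^2$ terms), and notes that the $\delta(a+bu^{p-1})$ term collapses to $-bu^{p-1}$ with $\delta$ effectively normalized away, so no negative constant $\delta$ is needed and the cleaner bound \eqref{15} results.
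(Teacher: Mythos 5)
Your proposal follows essentially the same route as the paper: write $f=\log u$, form the Li--Yau quantity with the improved $m$-Bochner inequality under $Ric_V^m\ge 0$, choose $\g=\g(m,p)\in(0,1)$ so that the cross terms in $u^{p-1}$ and $|\nabla f|^2$ have the right sign, and then run the Calabi/cutoff maximum-principle argument; this is precisely how the paper specializes its Lemma~2.1 (taking $\delta=-1$, $a=0$, $K=0$) to prove Theorem~1.4. One small clarification: when you say ``$\delta$ is effectively normalized away,'' note that your $F$ has in fact silently chosen the specific value $\delta=-1$ (so that the coefficient on $bu^{p-1}$ matches \eqref{15}); this choice does matter, since both of the coefficient inequalities you must verify — namely that the coefficient of $be^{(p-1)f}|\nabla f|^2$ and the coefficient multiplying $F$ itself are of the favorable sign — depend on $\delta$, and $\delta=-1$ together with $\g$ small enough (roughly $\g\le m(p-1)/8$) is what makes both hold for every $p>1$.
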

	
	For the elliptic equation of \eqref{KPP} with $a=0$ and $b<0$, we have
	
	\begin{thm}\label{dl0e}
		Let $(\M,g)$ be an n-dimensional Riemannian manifold with $Ric^m_V\ge-Kg$, where $K\ge0$ and $m>n$. Let $u$ be a positive solution of elliptic equation of \eqref{KPP} on $B(x_0,2R)$ with $a=0$, $b\le0$ and $p>1$, then
		\begin{equation}\label{16}
			\sup\limits_{B(x_0,R)}\frac{|\nabla u|^2}{u^2}-bu^{p-1}\le C\left(\frac{1}{R^2}+K\right),
		\end{equation}
		where $C=C(m,p)$.
	\end{thm}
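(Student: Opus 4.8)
The plan is to run an elliptic Cheng--Yau type gradient estimate for $f:=\log u$. Since $\Delta_V u=-bu^{p}$ on $B(x_0,2R)$, the function $f$ satisfies $\Delta_V f=-|\nabla f|^{2}-bu^{p-1}$. Put $G:=|\nabla f|^{2}=|\nabla u|^{2}/u^{2}$ and $\phi:=-bu^{p-1}\ge0$, so that $\Delta_V f=\phi-G$, and let $w:=G+\phi$ be exactly the quantity appearing in \eqref{16}. The first step is a Bochner computation. Using the $m$-dimensional Bochner inequality together with $Ric^m_V\ge-Kg$ one has $\tfrac12\Delta_V G\ge\tfrac1m(\phi-G)^{2}+\langle\nabla f,\nabla(\phi-G)\rangle-KG$; on the other hand $\nabla\phi=(p-1)\phi\nabla f$ gives $\Delta_V\phi=(p-1)(p-2)\phi G+(p-1)\phi^{2}$. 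Adding these and re-expressing $\langle\nabla f,\nabla G\rangle$ through $\langle\nabla f,\nabla w\rangle$ (via $\nabla G=\nabla w-(p-1)\phi\nabla f$), all the nonlinear remainders turn out to be nonnegative:
\[
\Delta_V w\ \ge\ \frac{2}{m}(\phi-G)^{2}\ +\ (p-1)(p+2)\,\phi G\ +\ (p-1)\phi^{2}\ -\ 2\langle\nabla f,\nabla w\rangle\ -\ 2KG .
\]
Since $(\phi-G)^{2}=w^{2}-4\phi G$ and $0\le\phi G\le w^{2}/4$, the first two terms are bounded below by $c(m,p)\,w^{2}$ with $c(m,p):=\min\{2/m,\ (p-1)(p+2)/4\}>0$, and using $G\le w$ for the curvature term we arrive at the key differential inequality $\Delta_V w\ge c(m,p)\,w^{2}-2\langle\nabla f,\nabla w\rangle-2Kw$.

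Next I would localize with a cutoff $\eta=\psi(d(x_0,\cdot)/R)$, where $\psi$ is smooth, nonincreasing, $\psi\equiv1$ on $[0,1]$, $\psi\equiv0$ near $[2,\infty)$, and $|\psi'|^{2}/\psi,\,|\psi''|$ bounded. By the Wei--Wylie Laplacian comparison $\Delta_V r\le C(m)(1/r+\sqrt K)$ for $r=d(x_0,\cdot)$, together with Calabi's trick to handle the cut locus, one obtains $|\nabla\eta|^{2}/\eta\le C/R^{2}$ and $\Delta_V\eta\ge-C(1+\sqrt KR)/R^{2}\ge-C(1/R^{2}+K)$ on $B(x_0,2R)$. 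Let $x^{\star}$ be a maximum point of $F:=\eta w$ on $\overline{B(x_0,2R)}$; it lies in the interior, and we may assume $F(x^{\star})>0$ (hence $\eta(x^{\star})>0$), for otherwise $w\equiv0$ on $B(x_0,R)$ and there is nothing to prove. At $x^{\star}$ one has $\nabla w=-(w/\eta)\nabla\eta$ and $\Delta_V F\le0$, hence $\eta\Delta_V w\le\big(2|\nabla\eta|^{2}/\eta-\Delta_V\eta\big)w\le C(1/R^{2}+K)\,w$. Substituting the key inequality, bounding $-2\langle\nabla f,\nabla w\rangle\ge-2w^{3/2}|\nabla\eta|/\eta$ via $|\nabla f|\le\sqrt w$, and absorbing $2w^{3/2}|\nabla\eta|$ into $c(m,p)\eta w^{2}$ by Young's inequality (this is where $|\nabla\eta|^{2}/\eta\le C/R^{2}$ enters) yields $\tfrac{c(m,p)}{2}\eta w^{2}(x^{\star})\le C(1/R^{2}+K)\,w(x^{\star})$. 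Dividing by $w(x^{\star})>0$ gives $F(x^{\star})\le C(m,p)(1/R^{2}+K)$, and since $\eta\equiv1$ on $B(x_0,R)$ this is precisely \eqref{16}.

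The cutoff estimates and the Young's-inequality bookkeeping are routine; the step that needs genuine care is the Bochner computation, namely verifying that after assembling $\Delta_V G$ and $\Delta_V(-bu^{p-1})$ and re-expressing the first-order term via $\nabla w$, every nonlinear remainder carries a favourable sign, so that the ``bad'' nonlinearity $bu^{p}$ (with $b\le0$) in fact works in our favour, and that the surviving $(\Delta_V f)^{2}/m=(\phi-G)^{2}/m$ term can be traded---with the help of the nonnegative $\phi G$ contribution---for a strictly positive multiple of $w^{2}$. It is exactly this structure that lets the coefficient of $|\nabla u|^{2}/u^{2}$ be $1$ rather than some $\g<1$ as in Theorem \ref{dl0}; the absence of a $u_{t}$ term removes the need to sacrifice a fraction of the gradient term.
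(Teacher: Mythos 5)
Your proof is correct, and it takes a cleaner route than the paper's own argument. The paper proves Theorem \ref{dl0e} by specializing Lemma \ref{kl} with $\delta=-1$ and $\gamma\in(0,1)$ (the same $\gamma$ chosen for Theorem \ref{dl0}, so that inequalities \eqref{260}--\eqref{270} force the nonlinear couplings to have favourable sign), and then running the cutoff argument for $F_\gamma=\gamma|\nabla f|^2-bu^{p-1}$. This yields $\sup_{B(x_0,R)}F_\gamma\le C(1/R^2+K)$ for some $\gamma<1$, and the stated estimate with coefficient $1$ is then recovered implicitly by noting that both $\gamma|\nabla f|^2$ and $-bu^{p-1}$ are individually nonnegative and therefore individually bounded. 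Your proposal instead takes $\gamma=1$ at the outset and works directly with $w=|\nabla f|^2-bu^{p-1}$; your decomposition $(\phi-G)^2=w^2-4\phi G$ together with the $(p-1)(p+2)\phi G$ contribution (which, after translation, is exactly what the terms $\frac{8}{m}\phi^2$, $(p-1)(p+1)\phi G$ and $-(\frac{8}{m}-(p-1))\phi w$ in the paper's Lemma \ref{kl} assemble to) lets you salvage a clean $c(m,p)w^2$ lower bound even though the raw coefficient of $\phi G$, namely $(p-1)(p+2)-\tfrac{8}{m}$, can be negative. That sign-splitting via $\min\{2/m,(p-1)(p+2)/4\}$ is the genuine extra idea: it removes the need for the sacrificial parameter $\gamma<1$ and so proves the stated coefficient-$1$ inequality in one pass, without the paper's patch-up step. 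The cutoff/maximum-principle bookkeeping is the same Bernstein--Yau machinery in both.

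One small caveat on your localization step: the paper's Lemma \ref{cut} states the Laplacian comparison for $\Delta\Phi$ rather than $\Delta_V\Phi$, whereas your argument (and indeed the term that actually appears when you expand $\Delta_V(\eta w)$) requires the bound on $\Delta_V\eta$. Your invocation of the Bakry--Émery version of the Laplacian comparison (Wei--Wylie, or Bakry--Qian as cited in the paper) is the right fix and matches what the paper must implicitly be using, so this is a presentational rather than substantive point.
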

	Theorem \ref{dl0e} not only provides the logarithmic gradient estimate, but also the uniform upper bound for the positive solutions of elliptic equation of \eqref{KPP} with $a=0$ and $b<0$. For $b>0$ case, similar estimate only holds for subcritical index $p$, see \cite{LU4}.
	
	The following two corollaries are direct results of Theorem \ref{dl0}.
	\begin{cor}\label{cl0}
		Let $(\M,g)$ be an n-dimensional closed manifold with $Ric^m_V\ge 0$ and $m>n$. Then there does not exist any positive ancient solution of \eqref{KPP} with $a=0$, $b<0$ and $p>1$. 
	\end{cor}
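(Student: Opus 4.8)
The plan is to assume that a positive ancient solution exists, run it through Theorem \ref{dl0} with $K=0$, use the compactness of $\M$ to send the spatial radius $R\to\infty$ and the ancientness to send the time length $T\to\infty$, and then reach a contradiction by evaluating the resulting pointwise inequality at a spatial maximum of $u(\cdot,t)$.

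Concretely, suppose $u>0$ solves \eqref{KPP} with $a=0$, $b<0$, $p>1$ on $\M\times(-\infty,0)$. Since the equation is autonomous, for each fixed $\tx<0$ and $T>0$ the time-translate $v(x,t):=u(x,t+\tx-T)$ is a positive solution of the same equation on $\M\times[0,T]$. Because $\M$ is closed, $B(x_0,2R)=B(x_0,R)=\M$ once $R>\mathrm{diam}(\M)$, so Theorem \ref{dl0} applies to $v$ (the hypothesis $Ric^m_V\ge0$ is exactly $K=0$); reading its conclusion at $t=T$ and translating back to $u$ gives
\[
\left(\g\frac{|\nabla u|^2}{u^2}-\frac{u_t}{u}-bu^{p-1}\right)(x,\tx)\le C\Bigl(\frac{1}{T}+\frac{1}{R^2}\Bigr),\qquad x\in\M,
\]
where $\g=\g(m,p)\in(0,1)$ and $C=C(m,p)$ are \emph{independent} of $R$ and $T$. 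Letting $R\to\infty$ and then $T\to\infty$ kills the right-hand side, and since $\tx<0$ was arbitrary we obtain
\[
\g\frac{|\nabla u|^2}{u^2}(x,t)-\frac{u_t}{u}(x,t)-bu^{p-1}(x,t)\le0\qquad\text{on }\M\times(-\infty,0).
\]

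Since $b<0$ and $u>0$, the term $-bu^{p-1}$ is strictly positive, so the inequality forces $u_t/u\ge -bu^{p-1}>0$, i.e. $u_t>0$ everywhere. On the other hand, for each fixed $t<0$ compactness yields a point $x_t\in\M$ at which $u(\cdot,t)$ attains its maximum; there $\nabla u(x_t,t)=0$ and $\d u(x_t,t)\le0$, hence $(Vu)(x_t,t)=0$ and $\Delta_V u(x_t,t)\le0$, so \eqref{KPP} gives $u_t(x_t,t)=\Delta_V u(x_t,t)+bu(x_t,t)^p\le bu(x_t,t)^p<0$, contradicting $u_t>0$. Therefore no positive ancient solution can exist. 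There is no serious obstacle here; the one thing that must genuinely be verified is that the constants $\g,C$ supplied by Theorem \ref{dl0} depend only on $m$ and $p$ — not on $R$ or $T$ — which is precisely the form in which that theorem is stated and is what legitimizes both limits. As a sanity check, the conclusion can also be obtained directly from the maximum principle: at a maximum point of $u(\cdot,t)$ one has $u_t\le bu^p<0$, so $M(t):=\max_{\M}u(\cdot,t)$ satisfies $M'\le bM^p<0$ in the barrier sense, and integrating this backward in time forces $M(t)^{1-p}\to-\infty$ as $t\to-\infty$, which is absurd.
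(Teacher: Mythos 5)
Your argument is correct and follows the paper's approach: both derive the pointwise inequality $u_t\ge -bu^p$ (the paper's \eqref{222}) from Theorem~\ref{dl0} by time-translation and sending $R,T\to\infty$, and both reach the contradiction by inspecting the spatial maximum of $u(\cdot,t)$ on the closed manifold (the paper rewrites the inequality via the PDE as $\Delta_V u\ge -2bu^p>0$ and appeals to the nonexistence of non-constant $V$-subharmonic functions, while you compare $u_t>0$ directly against $u_t\le bu^p<0$ at the max point --- the same computation). Your closing sanity check is a legitimate and strictly more elementary alternative worth emphasizing: the backward blow-up of $M(t)=\max_{\M}u(\cdot,t)$ forced by $M'\le bM^p<0$ uses only the parabolic maximum principle and compactness, so for closed manifolds this particular corollary does not actually need the differential Harnack machinery at all.
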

	
	\begin{cor}\label{nc0}
		Let $(\M,g)$ be an n-dimensional Riemannian manifold with $Ric^m_V\ge 0$ and $m>n$. Then  any positive ancient solution of \eqref{KPP} with $a=0$, $b<0$ and $p>1$ must satisfy that for any $x\in\M$, $u(x,t)=O((-t)^{\frac{1}{1-p}})$ as $t\to-\infty$. 
	\end{cor}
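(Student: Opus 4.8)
\emph{Strategy.} The plan is to feed the differential Harnack inequality of Theorem~\ref{dl0} a translated copy of $u$, to send both the spatial radius $R$ and the time-height to infinity so as to kill the error terms, and then to integrate the surviving differential inequality along the vertical line $\{x\}\times(-\infty,0)$.

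\emph{Step 1: from parabolic to elliptic Harnack.} I would fix $x\in\M$ and $s<0$, and for $\tau>0$ set $v(\cdot,t):=u(\cdot,\,t+s-\tau)$ for $t\in[0,\tau]$. Since \eqref{KPP} is autonomous and $u$ is an ancient solution, $v$ is again a positive solution of \eqref{KPP} on $B(x,2R)\times[0,\tau]$ for \emph{every} $R>0$ and $\tau>0$, with $a=0$, $b\le 0$, $p>1$, on a manifold with $Ric^m_V\ge0$. Applying Theorem~\ref{dl0} to $v$ and reading its conclusion at the centre $x$ and the top time $t=\tau$, then translating back, gives
\[
\Bigl(\gamma\frac{|\nabla u|^2}{u^2}-\frac{u_t}{u}-b u^{p-1}\Bigr)(x,s)\le C\Bigl(\frac{1}{\tau}+\frac{1}{R^2}\Bigr),
\]
with $\gamma=\gamma(m,p)\in(0,1)$ and $C=C(m,p)$ independent of $R$ and $\tau$. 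Because the left-hand side is a fixed finite number (a classical positive solution), letting $R\to\infty$ and then $\tau\to\infty$ produces the ``elliptic-type'' inequality
\[
\gamma\frac{|\nabla u|^2}{u^2}(x,t)-\frac{u_t}{u}(x,t)-b u^{p-1}(x,t)\le 0\qquad\text{on }\M\times(-\infty,0).
\]

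\emph{Step 2: integrating along a vertical line.} I would then discard the nonnegative term $\gamma|\nabla u|^2/u^2$ and use $b<0$ to obtain $(\log u)_t\ge -b\,u^{p-1}\ge 0$; in particular $u$ is nondecreasing in $t$. Fixing $x$ and writing $f(t):=u(x,t)>0$, this becomes $f'(t)\ge -b\,f(t)^p$, equivalently $\frac{d}{dt}f^{1-p}\le (p-1)b<0$. Integrating from $t$ to $t/2$ (legitimate since $t<0$) and discarding the positive boundary term $f(t/2)^{1-p}$ gives
\[
f(t)^{1-p}\ge \frac{(p-1)(-b)}{2}\,(-t),
\]
and, since $\frac{1}{1-p}<0$ makes $y\mapsto y^{1/(1-p)}$ decreasing on $(0,\infty)$,
\[
u(x,t)\le\Bigl(\frac{(p-1)(-b)}{2}\Bigr)^{\frac{1}{1-p}}(-t)^{\frac{1}{1-p}}=O\bigl((-t)^{\frac{1}{1-p}}\bigr)\qquad(t\to-\infty),
\]
which is precisely the assertion.

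\emph{Main obstacle.} The only delicate point is Step~1: I must make sure the translated $v$ is a bona fide solution on a \emph{full} cylinder $B(x,2R)\times[0,\tau]$ for arbitrarily large $R$ and $\tau$ --- this is exactly where the definition of an ancient solution (global in space and in the past) enters --- so that both $1/\tau$ and $1/R^2$ may be driven to $0$ while the structural constants $\gamma,C$ stay put. Once that limiting Harnack inequality is secured, only an elementary scalar ODE comparison remains, so I anticipate no further difficulty.
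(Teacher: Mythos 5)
Your proposal is correct and follows essentially the same route as the paper: the paper derives the pointwise differential inequality $\partial_t u\ge -bu^p$ on $\M\times(-\infty,0)$ (its equation (2.22)) from Theorem~\ref{dl0} via exactly the time-translation plus $R\to\infty$, $\tau\to\infty$ limiting argument you describe, and then invokes ODE comparison, which is what your Step 2 makes explicit. The only difference is that you spell out the scalar ODE integration ($\frac{d}{dt}u^{1-p}\le(p-1)b$) that the paper leaves to the reader.
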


	Now, we present a differential Harnack inequality with a parameter $\delta>0$, similar to inequality \eqref{13}. 	First, for any real numbers $m\ge1$, $p\ge1+\frac{2}{m}$ and $a\ge 0$, we define
	\begin{eqnarray}\label{t1}
		L(m,p,a)=\begin{cases}
			\frac{(p-1)a}{2}\qquad\qquad\quad \text{if}\qquad p\in\left[1+\frac{2}{m},1+\frac{4}{m}\right)\\
			\frac{2a}{m}\qquad\qquad\qquad\,\,\,\,\text{if}\qquad p\in\left[1+\frac{4}{m},\infty\right).
		\end{cases}
	\end{eqnarray}	
	Under some restrictions of index $p$ and the lower bounds of Bakry-\'{E}mery Ricci curvature, then we have the following  stronger differential Harnack inequalities.

	\begin{thm}\label{dg}
		Let $(\M,g)$ be an n-dimensional Riemannian manifold with $Ric^m_V\ge-Kg$, where $K\ge 0$ and $m>n$. Let $u$ be a positive solution of \eqref{KPP} on $B(x_0,2R)\times[0,T]$ with $a>0$ and $b<0$.\\
		{\rm (1)} If $p\in \left(1+\frac{2}{m},\infty\right)$ and  $K\in[0,L(m,p,a))$, there exist $\g=\g(m,p,a,K)\in(0,1)$ and $\delta=\delta(m,p,a,K)\in(0,1)$ such that
		\begin{eqnarray}\label{19}
			\left(\g\frac{|\nabla u|^2}{u^2}-\frac{u_t}{u}+\delta|a+bu^{p-1}|\right)(x,t)	\le C\left(\frac{1}{t}+\frac{1+\sqrt{K}R}{R^2}\right)
		\end{eqnarray}
		on $B(x_0,R)\times[0,T]$, where $C=C(m,p,a,K)$.\\
		{\rm (2)} If $p\in\left(\max\left(\frac{4}{m},1\right),\infty\right)$ and $K=0$, there exist $\g=\g(m,p,a)\in(0,1)$ and $\delta=\delta(m,p,a)\in(0,1)$ such that
		\begin{eqnarray}\label{110}
			\left(\g\frac{|\nabla u|^2}{u^2}-\frac{u_t}{u}+\delta|a+bu^{p-1}|\right)(x,t)	\le C\left(\frac{1}{t}+\frac{1}{R^2}\right)
		\end{eqnarray}
		on $B(x_0,R)\times[0,T]$, where $C=C(m,p,a)$.
	\end{thm}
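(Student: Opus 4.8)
The plan is to run a Li--Yau--Bernstein parabolic maximum principle argument on a modified quantity, parallel to the proof of Theorem \ref{dl}, but tracking carefully the coefficient of the reaction term so that a positive multiple $\delta|a+bu^{p-1}|$ can be retained rather than absorbed. Write $u=e^f$ (or set $w=\log u$) so that \eqref{KPP} becomes $f_t=\Delta_V f+|\nabla f|^2+a+be^{(p-1)f}$. For parameters $\gamma\in(0,1)$ and $\delta\in(0,1)$ to be chosen, introduce the Li--Yau quantity
\begin{equation*}
F=t\Bigl(\gamma|\nabla f|^2-f_t+\delta\bigl|a+be^{(p-1)f}\bigr|\Bigr)=t\Bigl(\gamma|\nabla f|^2-f_t-\delta\bigl(a+be^{(p-1)f}\bigr)\Bigr),
\end{equation*}
where in the second equality I use $a+bu^{p-1}\le a\le$ (something) — more precisely, one first checks via Theorem \ref{dl} or an elementary barrier argument that under $a>0,b<0$ any positive solution satisfies $a+bu^{p-1}>0$ eventually, or one simply carries the absolute value as two cases. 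Apply the Bochner formula with Bakry--\'Emery Ricci bounded below by $-K$; the $Ric_V^m\ge-Kg$ hypothesis supplies the improved term $\frac{1}{m}(\Delta_V f)^2$ in the inequality $\Delta_V|\nabla f|^2\ge \frac{2}{m}(\Delta_V f)^2-2K|\nabla f|^2+2\langle\nabla f,\nabla\Delta_V f\rangle$. Differentiating $F$ and using the evolution equation for $\Delta_V f$, the reaction term contributes $(p-1)be^{(p-1)f}(\cdots)$ terms whose sign is favorable precisely because $b<0$; here the definition of $L(m,p,a)$ in \eqref{t1} is engineered so that the ``bad'' curvature contribution $2K|\nabla f|^2$ is dominated by the ``good'' reaction contribution when $K<L(m,p,a)$.

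The key computation is the algebraic inequality at an interior maximum of $F$ (or of $\varphi F$ with $\varphi$ a Li--Yau cutoff supported in $B(x_0,2R)$): at such a point $\nabla F=0$ and $(\Delta_V-\partial_t)F\ge 0$, which after substituting the Bochner estimate yields a polynomial inequality in the quantities $|\nabla f|^2$, $\Delta_V f$, $e^{(p-1)f}$ of the schematic form
\begin{equation*}
0\ge t\Bigl(\tfrac{2}{m}(\text{linear in }F/t)^2-\text{(error)}\cdot\tfrac{F}{t}-\text{const}\Bigr)-F,
\end{equation*}
so that $F\le C(m,p,a,K)\bigl(1+t/R^2+\sqrt K\,t/R\bigr)$ follows by solving the quadratic. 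The role of the parameter $\delta>0$ (rather than $\delta<0$ as in Theorem \ref{dl}) is exactly to make the coefficient of $e^{2(p-1)f}$ and of $e^{(p-1)f}$ have the right sign after completing the square; the threshold $p>1+\frac{2}{m}$ enters because one needs $\frac{2}{m}$ room in the Bochner term to also dominate a term of size $(p-1)|\nabla f|^2 e^{(p-1)f}$, and the two regimes in $L(m,p,a)$ reflect whether the reaction term is controlled by its own square or by a cross term with $(\Delta_V f)^2$. For part (2), setting $K=0$ removes the $\sqrt K R/R^2$ term and relaxes the range to $p>\max(4/m,1)$ because the only constraint left is the one from the $p\in[1+4/m,\infty)$ branch of \eqref{t1} combined with $L(m,p,a)>0$ automatically.

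The main obstacle I anticipate is the bookkeeping in choosing $\gamma$ and $\delta$ simultaneously: $\gamma<1$ must be close enough to $1$ that the term $(1-\gamma)|\nabla f|^2$ left over from $|\nabla f|^2-f_t=\gamma|\nabla f|^2-f_t+(1-\gamma)|\nabla f|^2$ can be absorbed into the good Bochner term, while $\delta$ must be small enough that $\delta(a+bu^{p-1})$ stays dominated but large enough to be nontrivial, and both choices must be uniform over the relevant range — this is where the precise form of \eqref{t1} and the hypothesis $K<L(m,p,a)$ are consumed. A secondary technical point is handling the absolute value $|a+bu^{p-1}|$: on the region where $a+bu^{p-1}<0$ (i.e. $u>(-a/b)^{1/(p-1)}$) the quantity $F$ with the absolute value is not smooth, so one either works separately on the two regions and patches, or observes via Corollary \ref{c12}-type reasoning that on a local ball with the cutoff one may reduce to the region where the sign is fixed; I would handle this by splitting into the two sign cases and noting that in the case $a+bu^{p-1}\ge0$ the estimate \eqref{13} of Theorem \ref{dl} already gives more, so the new content is the complementary case where the absolute value equals $-(a+bu^{p-1})$ and the computation proceeds smoothly.
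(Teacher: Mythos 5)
Your overall framework (Li–Yau/Bernstein with the improved Bochner term from $Ric_V^m\ge -Kg$, cutoff, maximum principle on a parabolic cylinder) is the right one and does match the paper's method. However, there is a genuine gap in how you propose to handle the absolute value $|a+bu^{p-1}|$, and it breaks the argument.

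You first write $F=t(\gamma|\nabla f|^2-f_t+\delta|a+be^{(p-1)f}|)=t(\gamma|\nabla f|^2-f_t-\delta(a+be^{(p-1)f}))$, which implicitly assumes $a+bu^{p-1}\le 0$ globally; this is a sign error (and in fact the opposite sign is what eventually holds, by Corollary~\ref{c12}). Later you propose to split into sign cases and assert that ``in the case $a+bu^{p-1}\ge0$ the estimate \eqref{13} of Theorem~\ref{dl} already gives more.'' This is backwards: Theorem~\ref{dl} uses $\delta<0$, so when $a+bu^{p-1}\ge 0$ it contributes a \emph{nonpositive} term $\delta(a+bu^{p-1})$, whereas \eqref{19} needs a \emph{nonnegative} term $\delta(a+bu^{p-1})$ with $\delta>0$. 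That is strictly stronger, not weaker, and is precisely where the hypotheses $p>1+\tfrac{2}{m}$ and $K<L(m,p,a)$ must be consumed. Moreover, domain-splitting on the sign of $a+bu^{p-1}$ is problematic inside a single maximum-principle run, since the sign depends on $(x,t)$ and the extremum of the auxiliary function can migrate between regions.

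The mechanism the paper actually uses (and which you should adopt) is to prove \emph{two separate} differential Harnack inequalities for the two smooth auxiliary quantities $F_\pm=\gamma|\nabla f|^2-f_t\pm\delta(a+be^{(p-1)f})$, with the \emph{same} $\gamma$ and the same $\delta>0$, and then observe that for every $(x,t)$ the absolute-value expression coincides with one of $F_\pm$. To get the positive-$\delta$ version one first checks that the three inequalities \eqref{25}–\eqref{27} hold with strict inequality at $\delta=0$ (this is where $K<L(m,p,a)$ and the range of $p$ enter, and where the choice of $\gamma$ differs between the two branches of $L$), and then perturbs to small $|\delta|$ by continuity. For part (2) the paper additionally reduces to $\max(4/m,1)<p\le 1+2/m$, takes the degenerate choice $\gamma_0=1$, $\delta_0=1-\tfrac{m}{4}(p-1)$ to see the crucial coefficients are strictly positive, and then perturbs $\gamma$ slightly below $1$; this is a different argument than ``the constraint from the other branch applies automatically,'' which your sketch suggests. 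As written, your proposal omits the two-inequality structure and contains the incorrect reduction to Theorem~\ref{dl}, so it would not close.
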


	The following Liouville property is a direct corollary of Theorem \ref{dg}.
	\begin{cor}\label{pliou}
		Let $(\M,g)$ be an n-dimensional Riemannian manifold with $Ric^m_V\ge-Kg$, where $K\ge 0$ and $m>n$. If $u$ is a positive ancient solution of \eqref{KPP} with $a>0$ and $b<0$, and at least one of the following two hold:\\
		{\rm(a)} $p\in(1+\frac{2}{m},\infty)$ and $K\in[0,L(m,p,a))$;\\
		{\rm (b)} $p\in\left(\max\left(\frac{4}{m},1\right),\infty\right)$ and $K=0$,\\
		then for any $x\in\M$, either one of the following two cases must occur:\\
		{\rm (1)} $\lim\limits_{t\to-\infty}u(x,t)= \left(-\frac{a}{b}\right)^{\frac{1}{p-1}}$;\\
		{\rm (2)} $u(x,t)=O(e^{ct})$ as $t\to -\infty$ for some $c>0$.
	\end{cor}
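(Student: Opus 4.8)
The plan is to pass Theorem \ref{dg} to the limit $R,T\to\infty$, obtaining a global differential Harnack inequality on $\M\times(-\infty,0)$, and then to extract the dichotomy from an elementary differential inequality for $t\mapsto u(x,t)$ at each fixed $x$. Fix $x\in\M$ and $t_0<0$. For arbitrary $R>0$, $T>0$ the function $(y,s)\mapsto u(y,s+t_0-T)$ is, by time-translation invariance, a positive solution of \eqref{KPP} on $B(x,2R)\times[0,T]$ (this is legitimate because $[t_0-T,t_0]\subset(-\infty,0)$, as $t_0<0$). Applying \eqref{19} under hypothesis (a), or \eqref{110} under hypothesis (b), and evaluating the resulting inequality at $(x,T)$ (which for $u$ is the point $(x,t_0)$) gives
\[
\left(\g\frac{|\nabla u|^2}{u^2}-\frac{u_t}{u}+\delta\,|a+bu^{p-1}|\right)(x,t_0)\le C\left(\frac1T+\frac{1+\sqrt{K}R}{R^2}\right),
\]
where $\g,\delta,C$ depend only on $m,p,a$ (and on $K$ in case (a)) but not on $R$ or $T$. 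Letting $R\to\infty$ and then $T\to\infty$ sends the right-hand side to $0$; since $x\in\M$ and $t_0<0$ were arbitrary, we conclude
\[
\g\frac{|\nabla u|^2}{u^2}-\frac{u_t}{u}+\delta\,|a+bu^{p-1}|\le 0\qquad\text{on }\M\times(-\infty,0).
\]

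Discarding the nonnegative gradient term gives $\partial_t\log u=\frac{u_t}{u}\ge\delta\,|a+bu^{p-1}|\ge 0$, so for each fixed $x$ the map $t\mapsto u(x,t)$ is nondecreasing and positive on $(-\infty,0)$; hence $\ell(x):=\lim_{t\to-\infty}u(x,t)=\inf_{t<0}u(x,t)$ exists in $[0,\infty)$. The two cases of the corollary correspond to $\ell(x)>0$ and $\ell(x)=0$. If $\ell(x)>0$, fix $t_1<0$ and integrate $\partial_t\log u\ge\delta\,|a+bu^{p-1}|$ over $[t,t_1]$: the left side is $\log\frac{u(x,t_1)}{u(x,t)}$, which remains bounded as $t\to-\infty$ since $u(x,t)\to\ell(x)>0$, so $\int_{-\infty}^{t_1}|a+bu(x,s)^{p-1}|\,ds<\infty$. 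The integrand tends to $|a+b\ell(x)^{p-1}|$ as $s\to-\infty$, and a convergent integral forces this limit to vanish; since $a>0$, $b<0$, $p>1$, this means $\ell(x)=(-a/b)^{1/(p-1)}$, i.e. case (1).

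If instead $\ell(x)=0$, then $u(x,t)\to0$ as $t\to-\infty$, so there is $\tx<0$ with $u(x,t)^{p-1}\le-\frac{a}{2b}$, hence $a+bu(x,t)^{p-1}\ge\frac a2$, for all $t\le\tx$. Thus $\partial_t\log u(x,t)\ge\frac{\delta a}{2}=:c>0$ on $(-\infty,\tx]$, and integrating from $t$ up to $\tx$ yields $u(x,t)\le\big(u(x,\tx)e^{-c\tx}\big)e^{ct}$, i.e. $u(x,t)=O(e^{ct})$ as $t\to-\infty$, which is case (2). Since $\ell(x)>0$ and $\ell(x)=0$ exhaust all possibilities, this proves the corollary.

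I do not anticipate a real obstacle: all the analytic substance is already contained in Theorem \ref{dg}, whose hypotheses (a), (b) are exactly those making \eqref{19}, \eqref{110} available. The only point worth a sentence of care is that the constants $\g,\delta$ furnished by the theorem must not degenerate in the limit $R,T\to\infty$ — which holds because they depend only on $m,p,a,K$ — after which the error term is visibly $o(1)$ and the remaining argument is the elementary monotonicity-plus-integration dichotomy above. As a byproduct one also recovers, in case (1), the sharp value of the limit, refining the $\limsup$ bound of Corollary \ref{c12}.
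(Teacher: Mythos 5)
Your proposal is correct and follows essentially the same route as the paper: both pass Theorem \ref{dg} to the limit $R,T\to\infty$ via time translation to obtain $\partial_t\log u\ge\delta\,|a+bu^{p-1}|\ge 0$ on $\M\times(-\infty,0)$, and both extract the dichotomy from this pointwise differential inequality. The only difference is cosmetic: the paper invokes Corollary \ref{c12} to bound $\limsup_{t\to-\infty}u$ by $(-a/b)^{1/(p-1)}$ and then argues ``if (1) fails then eventually $u<l<(-a/b)^{1/(p-1)}$, whence exponential decay,'' whereas you observe monotonicity of $t\mapsto u(x,t)$, split directly on $\ell(x)=\lim_{t\to-\infty}u(x,t)$ being zero or positive, and in the positive case use a convergence-of-improper-integral argument to pin down $\ell(x)=(-a/b)^{1/(p-1)}$ without appealing to Corollary \ref{c12}. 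Your version is thus slightly more self-contained, but the analytic content is identical.
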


	Furthermore, using Theorem \ref{dg}, we also readily obtain the following sharp asymptotic behavior for solutions on positive time.
	
	\begin{cor}\label{pliouf}
		Let $(\M,g)$ be an n-dimensional Riemannian manifold with $Ric^m_V\ge-Kg$, where $K\ge 0$ and $m>n$. If $u$ is a positive  solution of \eqref{KPP} on $\M\times[0,\infty)$ with $a>0$ and $b<0$, and at least one of the following two hold:\\
		{\rm(a)} $p\in(1+\frac{2}{m},\infty)$ and $K\in[0,L(n,p,a))$;\\
		{\rm (b)} $p\in\left(\max\left(\frac{4}{m},1\right),\infty\right)$ and $K=0$,\\
		then for any $x\in\M$, $\lim\limits_{t\to\infty}u(x,t)=\left(-\frac{a}{b}\right)^{\frac{1}{p-1}}$.
	\end{cor}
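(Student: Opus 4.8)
The plan is to feed Theorem \ref{dg} a ball of radius $R\to\infty$ and then analyse the resulting scalar differential inequality along the trajectory $t\mapsto u(x,t)$. Put $\ell:=\left(-\frac{a}{b}\right)^{\frac{1}{p-1}}$, the unique positive zero of $s\mapsto a+bs^{p-1}$ (legitimate since $a>0$, $b<0$, $p>1$); the target is $u(x,t)\to\ell$ for each fixed $x$. First I would apply Theorem \ref{dg} — part (1) under hypothesis (a), part (2) under hypothesis (b) — to the restriction of $u$ to $B(x,2R)\times[0,T]$ with center $x_0=x$. Since $\g$, $\delta$, $C$ there depend only on $m,p,a,K$, letting $R\to\infty$ and $T\to\infty$ kills the $R$-dependent term on the right and yields, for every $t>0$,
\begin{equation*}
\g\frac{|\nabla u|^2}{u^2}(x,t)-\frac{u_t}{u}(x,t)+\delta\bigl|a+bu(x,t)^{p-1}\bigr|\le\frac{C}{t}.
\end{equation*}
Writing $v(t):=u(x,t)>0$ and $g(s):=|a+bs^{p-1}|$ (continuous, $\ge0$, vanishing exactly at $s=\ell$), and discarding the gradient term, this becomes
\begin{equation*}
\frac{v'(t)}{v(t)}\ \ge\ \delta\,g(v(t))-\frac{C}{t},\qquad t>0.
\end{equation*}
The mechanism to be exploited repeatedly is that, for $t$ large, $C/t$ is dominated by $\delta\,g(v)$ whenever $v$ stays a fixed distance from $\ell$, so there $v'>0$; in particular, past a fixed time $v$ can cross a level $\ell\pm\e$ only upward.

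The main obstacle, and the only place the forward-completeness of $u$ enters, is ruling out blow-up so as to bound $v$. I would fix $M_0>\ell$ large enough that $\frac{\delta|b|}{2}M_0^{p-1}\ge\delta a+C$; the inequality above then forces $v'\ge\frac{\delta|b|}{2}v^{p}>0$ whenever $v\ge M_0$ and $t\ge1$. Hence if $v(t_1)\ge M_0$ at some $t_1\ge1$, then $v$ stays $\ge M_0$ thereafter, the superlinear inequality persists for all $t\ge t_1$, and $v$ blows up in finite time — impossible, since $u$ is a finite classical solution on all of $\M\times[0,\infty)$. So $v<M_0$ on $[1,\infty)$, and $v$ is bounded. (That this step genuinely requires the hypothesis is clear from the dichotomy in Corollary \ref{pliou}, where blow-up in backward time is not excluded.)

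It then remains to pinch $v$ to $\ell$, using only the bound on $v$ and the crossing observation above. For the $\limsup$: given $\e>0$, one has $g(v)\ge c_2:=|b|\bigl((\ell+\e)^{p-1}-\ell^{p-1}\bigr)>0$ whenever $v\ge\ell+\e$; so if ever $v(t_1)>\ell+\e$ at a large $t_1$, the inequality gives $v'/v\ge\frac{\delta c_2}{2}>0$ as long as $v>\ell+\e$, hence $v$ never returns and grows exponentially — contradicting boundedness. Thus $v\le\ell+\e$ eventually and $\limsup_{t\to\infty}v(t)\le\ell$. For the $\liminf$: given $\e\in(0,\ell)$, one has $g(v)\ge c_1:=|b|\bigl(\ell^{p-1}-(\ell-\e)^{p-1}\bigr)>0$ whenever $v\le\ell-\e$, so $v$ cannot stay $\le\ell-\e$ on an infinite time interval (exponential growth again), while for $t$ large the inequality forces $v'>0$ at any instant where $v=\ell-\e$; hence once $v$ reaches $\ell-\e$ — which it must — after a large time, it remains $\ge\ell-\e$, giving $\liminf_{t\to\infty}v(t)\ge\ell$. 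Combining, $\lim_{t\to\infty}u(x,t)=\left(-\frac{a}{b}\right)^{\frac{1}{p-1}}$ for every $x\in\M$, which is the claim.
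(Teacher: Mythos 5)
Your proof is correct and takes essentially the same route as the paper: pass to $R\to\infty$ in Theorem \ref{dg}, discard the gradient term, and analyse the scalar inequality $v'(t)/v(t)\ge\delta\,g(v(t))-C/t$ along $t\mapsto u(x,t)$. The only structural difference is that the paper argues directly by ODE comparison against the auxiliary Cauchy problem $w_t=\delta|aw+bw^p|-Cw/t$, $w(t_{n_0})=u(t_{n_0})$ (finite-time blow-up when the initial value exceeds $\ell=(-a/b)^{1/(p-1)}$, convergence to $\ell$ from below), whereas you first extract boundedness of $v$ from a superlinear blow-up contradiction and then run elementary level-crossing arguments around $\ell\pm\e$; the two are logically equivalent, and making the boundedness step explicit is a small tidiness gain rather than a different method.
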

	
	If we consider the eternal solutions, we have more precise decay behavior.
	\begin{cor}\label{plioue}
		Let $(\M,g)$ be an n-dimensional Riemannian manifold with $Ric^m_V\ge-Kg$, where $K\ge 0$ and $m>n$. If $u$ is a positive eternal solution of \eqref{KPP} with $a>0$ and $b<0$, and at least one of the following two hold:\\
		{\rm(a)} $p\in(1+\frac{2}{m},\infty)$ and $K\in[0,L(m,p,a))$;\\
		{\rm (b)} $p\in\left(\max\left(\frac{4}{m},1\right),\infty\right)$ and $K=0$,\\
		then for any $x\in\M$,  $u(x,t)=\left(-\frac{a}{b}\right)^{\frac{1}{p-1}}-O(e^{-ct})$ as $t\to\infty$, where $c$ is a positive number.
	\end{cor}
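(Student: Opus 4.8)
The plan is to pass from Theorem \ref{dg} to its pointwise form on an eternal solution, combine it with the sharp upper bound of Corollary \ref{c12}, and then reduce the question to a scalar Gr\"onwall argument for $t\mapsto u(x,t)$. Throughout, write $\bar u:=\left(-\frac ab\right)^{\frac1{p-1}}$.

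\textbf{Step 1 (pointwise Harnack for eternal solutions).} Let $u$ be as in the statement, fix $x\in\M$ and $t_0\in\R$. For $s<t_0$ and $R>0$, the translate $(y,\tau)\mapsto u(y,\tau+s)$ is a positive solution of \eqref{KPP} on $B(x,2R)\times[0,t_0-s]$, so Theorem \ref{dg} — case (1) under hypothesis (a), case (2) under hypothesis (b) — applied at the center $x$ and endpoint time $t_0-s$ gives
\[
\left(\g\frac{|\nabla u|^2}{u^2}-\frac{u_t}{u}+\delta\left|a+bu^{p-1}\right|\right)(x,t_0)\le C\left(\frac1{t_0-s}+\frac{1+\sqrt KR}{R^2}\right).
\]
Letting $s\to-\infty$ and then $R\to\infty$ kills the right-hand side, and since $(x,t_0)$ was arbitrary,
\[
\g\frac{|\nabla u|^2}{u^2}-\frac{u_t}{u}+\delta\left|a+bu^{p-1}\right|\le 0\qquad\text{on }\M\times\R .
\]
By Corollary \ref{c12} we have $u\le\bar u$, hence $a+bu^{p-1}\ge0$; dropping the modulus and the nonnegative gradient term leaves
\[
u_t\ \ge\ \delta\,u\,(a+bu^{p-1})\ \ge\ 0\qquad\text{on }\M\times\R,
\]
with $\delta=\delta(m,p,a,K)>0$.

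\textbf{Step 2 (convergence to $\bar u$).} Fix $x$ and set $f(t):=u(x,t)$. By Step 1, $f$ is nondecreasing, and $0<f\le\bar u$ by Corollary \ref{c12}, so $L:=\lim_{t\to\infty}f(t)$ exists with $L\le\bar u$. If $L<\bar u$, then $a+bL^{p-1}>0$ (the map $y\mapsto a+by^{p-1}$ is strictly decreasing on $(0,\infty)$ with unique zero $\bar u$), so $a+bf(t)^{p-1}\ge\eta:=\tfrac12(a+bL^{p-1})>0$ for all large $t$; then $f'(t)\ge\delta\eta f(t)\ge\delta\eta f(t_1)>0$ for $t\ge t_1$, forcing $f(t)\to\infty$ and contradicting $f\le\bar u$. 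Hence $\lim_{t\to\infty}u(x,t)=\bar u$. \textbf{Step 3 (exponential rate).} Put $g(t):=\bar u-f(t)\ge0$, so $g(t)\downarrow0$. Since $a+b\bar u^{p-1}=0$, the mean value theorem gives $a+bf^{p-1}=-b(p-1)\xi^{p-2}g$ with $\xi$ between $f$ and $\bar u$, so $-g'(t)\ge\delta(-b)(p-1)f(t)\xi^{p-2}g(t)$. Choosing $t_2$ with $g\le\bar u/2$ on $[t_2,\infty)$ forces $f,\xi\ge\bar u/2$, hence $\xi^{p-2}\ge c_1\bar u^{p-2}$ for a positive $c_1=c_1(p)$ (one separates $p\ge2$ from $1<p<2$ only for this lower bound), and using $(-b)\bar u^{p-1}=a$ one obtains $-g'(t)\ge c\,g(t)$ on $[t_2,\infty)$ with $c=c(m,p,a,K)>0$. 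Then $\big(e^{ct}g(t)\big)'\le0$, so $g(t)\le g(t_2)e^{ct_2}e^{-ct}$, i.e. $u(x,t)=\bar u-O(e^{-ct})$ as $t\to\infty$, with $c$ independent of $x$.

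\textbf{Main obstacle.} There is no real difficulty once Theorem \ref{dg} is available; the only points requiring care are the limit $s,R\to\infty$ in Step 1 (routine, since the spatial error $\tfrac{1+\sqrt KR}{R^2}\to0$) and keeping $\xi^{p-2}$ bounded away from $0$ in Step 3 — automatic near the positive equilibrium $\bar u$, but precisely what upgrades the decay from polynomial to exponential. I expect the full write-up to be quite short.
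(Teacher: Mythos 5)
Your proposal is correct and follows essentially the same route as the paper's proof: derive the pointwise differential inequality $u_t\ge\delta u\,(a+bu^{p-1})$ from Theorem \ref{dg} via the translate-and-limit argument and the upper bound $u\le\left(-\tfrac ab\right)^{1/(p-1)}$ of Corollary \ref{c12}, then close with an ODE comparison. The only difference is that you spell out the comparison step (Steps 2--3, including the linearization at the equilibrium) where the paper simply invokes ``ODE comparison directly derives the desired result.''
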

	
	From  Corollary \ref{plioue}, we have the following Liouville theorem for the  elliptic equation of \eqref{KPP}.
	\begin{cor}\label{eliou}
		Let $(\M,g)$ be an n-dimensional Riemannian manifold with $Ric^m_V\ge-Kg$, where $K\ge 0$ and $m>n$. If $u$ is a positive solution of   elliptic equation of \eqref{KPP} on $\M$ with $a>0$, $b<0$,  $p\in(1+\frac{2}{m},\infty)$ and   $K\in[0,L(m,p,a))$, then $u\equiv\left(-\frac{a}{b}\right)^{\frac{1}{p-1}}$.
	\end{cor}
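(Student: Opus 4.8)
The plan is to deduce Corollary \ref{eliou} from Corollary \ref{plioue} by regarding a static solution as an eternal one. First I would note that if $u\colon\M\to(0,\infty)$ solves the elliptic equation of \eqref{KPP}, i.e. $\Delta_V u+au+bu^p=0$, then the time-independent function $\tilde u(x,t):=u(x)$ on $\M\times\R$ satisfies $\partial_t\tilde u=0=\Delta_V\tilde u+a\tilde u+b\tilde u^p$, so $\tilde u$ is a positive \emph{eternal} solution of \eqref{KPP}. The standing hypotheses $a>0$, $b<0$, $p\in\left(1+\frac{2}{m},\infty\right)$ and $K\in[0,L(m,p,a))$ are precisely case {\rm (a)} in Corollary \ref{plioue}, so that corollary applies to $\tilde u$.

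Next I would invoke Corollary \ref{plioue}: for each fixed $x\in\M$ there is $c=c(x)>0$ with $\tilde u(x,t)=\left(-\frac{a}{b}\right)^{\frac{1}{p-1}}-O(e^{-ct})$ as $t\to\infty$. But $\tilde u(x,t)=u(x)$ is independent of $t$; letting $t\to\infty$ annihilates the $O(e^{-ct})$ term and forces $u(x)=\left(-\frac{a}{b}\right)^{\frac{1}{p-1}}$. As $x$ is arbitrary, $u\equiv\left(-\frac{a}{b}\right)^{\frac{1}{p-1}}$, which is indeed a constant solution of the elliptic equation since $a+b\left(-\frac{a}{b}\right)=0$.

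Since all the analytic substance is already packaged in Theorem \ref{dg} and its corollaries, there is essentially no real obstacle here; the only (trivial) points needing care are the reduction of the elliptic problem to the eternal parabolic problem and the remark that a $t$-independent function matching an exponentially decaying asymptotic profile must equal the limiting constant. Should one prefer to avoid the time-dependent corollaries altogether, an equally short alternative is to feed $\tilde u$ directly into \eqref{19}: its left-hand side is then $t$-independent, while the right-hand side $C\left(\frac{1}{t}+\frac{1+\sqrt{K}R}{R^2}\right)$ tends to $0$ as $t\to\infty$ and $R\to\infty$, so on all of $\M$ one gets $\gamma\frac{|\nabla u|^2}{u^2}+\delta\left|a+bu^{p-1}\right|\le 0$; as $\gamma,\delta>0$ this forces $|\nabla u|\equiv 0$ and $a+bu^{p-1}\equiv 0$, i.e. again $u\equiv\left(-\frac{a}{b}\right)^{\frac{1}{p-1}}$.
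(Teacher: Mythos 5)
Your primary argument is correct and matches the paper's intent: the paper simply cites Corollary \ref{plioue}, and you fill in the (routine) step of regarding a static positive solution as a time-independent eternal solution, so that the large-time convergence to $\left(-\frac{a}{b}\right)^{\frac{1}{p-1}}$ forces equality everywhere. Your alternative via \eqref{19} is also sound, but the first route is exactly the one the paper is pointing at.
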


	Now, using  Theorem \ref{dg}, for any $p>1$, we can choose $m=m(n,p)>n$ such that $1+\frac{2}{m}>p$, and so  under Ricci non-negative condition (that is $V=0$), we have 
	\begin{thm}\label{xs}
		Let $(\M,g)$ be an n-dimensional Riemannian manifold with $Ric\ge0$. If $u$ is a positive solution of \eqref{KPP} on $B(x_0,2R)\times[0,T]$ with $a>0$ and $b<0$, there exist $\g=\g(n,p,a)\in(0,1)$ and $\delta=\delta(n,p,a)>0$ such that 
		\begin{eqnarray}
			\g\frac{|\nabla u|^2}{u^2}-\frac{\p u}{u}+\delta|a+bu^{p-1}|	\le  C\left(\frac{1}{t}+\frac{1}{R^2}\right)
		\end{eqnarray}
		on $B(x_0,R)\times [0,T]$, where $C=C(n,p,a)$.
	\end{thm}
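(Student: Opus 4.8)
The plan is to derive Theorem \ref{xs} from Theorem \ref{dg} by a judicious choice of the auxiliary parameter $m$. In the setting here one has $V=0$ (the curvature hypothesis being $Ric\ge 0$ rather than a Bakry-\'{E}mery bound), so $\Delta_V=\Delta$ and, crucially, $Ric^m_V=Ric_V=Ric\ge 0$ for \emph{every} $m>n$. In other words, whatever $m>n$ we pick, the manifold satisfies $Ric^m_V\ge -Kg$ with $K=0$. We are thus free to choose $m=m(n,p)$ so as to place $p$ in the admissible index range of Theorem \ref{dg}.

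Concretely, given $p>1$, I would fix an $m=m(n,p)$ with $m>\max\{n,\,4/p\}$; such an $m$ exists because $4/p<\infty$. Then $m>n$ and $p>4/m$, hence $p\in(\max(4/m,1),\infty)$, and the curvature condition holds with $K=0$. Therefore all hypotheses of part (2) of Theorem \ref{dg} are satisfied by $u$ on $B(x_0,2R)\times[0,T]$ (recall $a>0$ and $b<0$ are assumed). Applying that part yields $\g=\g(m,p,a)\in(0,1)$, $\delta=\delta(m,p,a)\in(0,1)$ and $C=C(m,p,a)$ for which inequality \eqref{110} holds on $B(x_0,R)\times[0,T]$; since $\p u/u=u_t/u$, this is exactly the estimate claimed in Theorem \ref{xs}. (Alternatively one could take $m>\max\{n,\,2/(p-1)\}$, so that $p>1+2/m$, and invoke part (1) instead, using that $L(m,p,a)>0$ for $a>0$, $p>1$ and hence $K=0\in[0,L(m,p,a))$; this gives the same conclusion.)

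Since $m$ was fixed as a function of $n$ and $p$ alone, the constants $\g,\delta,C$ produced by Theorem \ref{dg} depend only on $n,p,a$; and because $(0,1)\subset(0,\infty)$, in particular $\delta=\delta(n,p,a)>0$ as required. Relabelling $\g=\g(n,p,a)$, $\delta=\delta(n,p,a)$, $C=C(n,p,a)$ completes the proof.

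I do not expect any genuine obstacle: Theorem \ref{xs} is a corollary of Theorem \ref{dg}, and all the hard analysis lives in the latter. The one point demanding care is the parameter bookkeeping — checking that a single $m$ can simultaneously satisfy $m>n$ and the relevant index restriction, and that after substituting $m=m(n,p)$ no hidden dependence on $m$ survives in $\g,\delta,C$. Both are immediate from the explicit inequalities, but they should be recorded to justify the stated dependence of the constants on $n,p,a$ only.
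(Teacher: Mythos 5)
Your proposal is correct and follows exactly the route the paper takes: Theorem \ref{xs} is presented in the paper not with a separate proof but with the one-line remark preceding it that one should choose $m=m(n,p)>n$ and invoke Theorem \ref{dg} with $K=0$ (using $Ric^m_V=Ric$ when $V=0$). Your bookkeeping fills in the details correctly, and in fact fixes what appears to be a sign typo in the paper's prefatory sentence (the paper writes ``$1+\frac{2}{m}>p$'' where it should read ``$1+\frac{2}{m}<p$'', since part (1) of Theorem \ref{dg} requires $p>1+\frac{2}{m}$, and the displayed inequality cannot hold for all $p>1$ together with $m>n$). Either of your two routes — $m>\max\{n,4/p\}$ via part (2), or $m>\max\{n,2/(p-1)\}$ via part (1) with $K=0\in[0,L(m,p,a))$ — is valid, and your observation that the resulting constants depend only on $n,p,a$ once $m$ is fixed as a function of $n,p$ is precisely the point the paper is implicitly relying on.
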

	By Theorem \ref{xs}, under Ricci nonnegative condition, we readily derive  the following optimal asymptotic behavior as Corollary \ref{pliou}-Corollary \ref{plioue}.
	\begin{thm}\label{xl}
		Let $(\M,g)$ be an n-dimensional Riemannian manifold with $Ric\ge0$.\\
		(1) If $u$ is a positive ancient solution of \eqref{KPP} with $a>0,b<0$, $p>1$ and $V=0$, then for any $x\in\M$, $u(x,t)=O(e^{ct})$ as $t\to -\infty$ for some $c>0$ unless $u$ is constant.\\
		(2) If $u$ is a positive solution of \eqref{KPP} on $\M\times[0,\infty)$ with $a>0,b<0$, $p>1$ and $V=0$, then for any $x\in\M$, $u(x,t)\to \left(-\frac{a}{b}\right)^{\frac{1}{p-1}}$ as $t\to \infty$.\\
		(3) If $u$ is a positive eternal solution of \eqref{KPP} with $a>0,b<0$, $p>1$ and $V=0$, then for any $x\in\M$, $u(x,t)=\left(-\frac{a}{b}\right)^{\frac{1}{p-1}}-O(e^{-ct})$ as $t\to \infty$ for some $c>0$.
	\end{thm}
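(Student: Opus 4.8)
The plan is to deduce the asymptotics from the differential Harnack inequality of Theorem~\ref{xs}, along the lines of Corollaries~\ref{pliou}--\ref{plioue}, and then to add one maximum--principle step to get the word ``constant'' in part~(1). First, since $V=0$ and $Ric\ge0$, Theorem~\ref{xs} applies on every $B(x_0,2R)\times[t_1,t_2]$; for an ancient or eternal solution I would translate in time and let $R\to\infty$ and $t_2-t_1\to\infty$ to obtain, everywhere on $\M\times\mathbb R$,
\[
\g\frac{|\nabla u|^2}{u^2}-\frac{u_t}{u}+\delta\bigl|a+bu^{p-1}\bigr|\ \le\ 0 ,
\]
while for a solution on $\M\times[0,\infty)$ I would let only $R\to\infty$ and get the same expression $\le C/t$ on $\M\times(0,\infty)$. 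In the ancient/eternal case this forces $u_t\ge\delta u\,|a+bu^{p-1}|\ge0$, so $t\mapsto u(x,t)$ is nondecreasing; by Corollary~\ref{c12} the limit $L(x):=\lim_{t\to-\infty}u(x,t)$ exists in $[0,(-a/b)^{1/(p-1)}]$, and by interior parabolic regularity (available under $Ric\ge0$) $L$ is a nonnegative solution of the elliptic form of \eqref{KPP}.

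Next comes the pointwise dichotomy. Fixing $x$, I would divide the displayed inequality by $u$ and integrate in time over $(-\infty,t_0)$, getting $\g\int_{-\infty}^{t_0}\frac{|\nabla u|^2}{u^2}\,ds+\delta\int_{-\infty}^{t_0}\bigl|a+bu^{p-1}\bigr|\,ds\le\log\frac{u(x,t_0)}{L(x)}$ when $L(x)>0$; finiteness of the right side then forces $|a+bL(x)^{p-1}|=0$, i.e. $L(x)=(-a/b)^{1/(p-1)}$. If instead $L(x)=0$ then $|a+bu^{p-1}|\ge a/2$ for $s$ large negative, and a second integration gives $u(x,t)=O(e^{ct})$ as $t\to-\infty$ with $c=\delta a/2$. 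By the strong maximum principle either $L\equiv0$ or $L>0$, and in the latter case Corollary~\ref{eliou} (which under $Ric\ge0$ applies for every $p>1$ after choosing $m=m(n,p)$ large) forces $L\equiv(-a/b)^{1/(p-1)}$. So either $u(x,t)=O(e^{ct})$ at every $x$, which is alternative~(1); or $u\to(-a/b)^{1/(p-1)}$ pointwise and $u\ge(-a/b)^{1/(p-1)}=:u_\ast$ on $\M\times(-\infty,0)$.

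In this remaining case $w:=u-u_\ast\ge0$ solves a linear parabolic equation $w_t=\Delta w+c(x,t)w$ with $c$ locally bounded, so if $w$ vanishes somewhere the parabolic strong maximum principle plus forward uniqueness give $w\equiv0$, i.e. $u\equiv u_\ast$ is constant; if $w>0$ everywhere, then $\Delta u(\cdot,t)=u_t-(au+bu^p)\ge|au+bu^p|>0$ while $\sup_\M u(\cdot,t)<\infty$ (a large value would blow up forward in finite time via the inequality above), so the Omori--Yau maximum principle forces $\inf_\M u(\cdot,t)=u_\ast$, and a Liouville-type estimate for the bounded nonnegative subsolution $w$ of the heat equation on $\M\times(-\infty,t_0]$ (using $w\to0$ as $t\to-\infty$) should yield $w\equiv0$, a contradiction. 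This proves~(1). For~(2), the same blow-up obstruction bounds $u$ on $\M\times[1,\infty)$; then $\frac{d}{dt}(\log u+C\log t)\ge\delta|a+bu^{p-1}|\ge0$ together with the comparison $u_t\le au+bu^p$ at (Omori--Yau) spatial maxima gives $\liminf_{t\to\infty}u(x,t)\ge u_\ast$ and $\limsup_{t\to\infty}u(x,t)\le u_\ast$, exactly as in Corollary~\ref{pliouf}. For~(3), $u\le u_\ast$ by Corollary~\ref{c12}, and inserting $u_\ast^{p-1}-u^{p-1}\ge c(u_\ast-u)$ into the inequality above shows $v:=u_\ast-u$ satisfies $v_t\le-c'v$ for $t$ large, hence $u(x,t)=u_\ast-O(e^{-c't})$ as $t\to\infty$, exactly as in Corollary~\ref{plioue}.

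The main obstacle is the last implication above: a non-constant positive ancient solution cannot approach the equilibrium $u_\ast$ as $t\to-\infty$. Reducing (by the strong maximum principle) to the case $u>u_\ast$ everywhere is routine, and the crude differential inequality $u_t/u\ge\delta|b|(u^{p-1}-u_\ast^{p-1})$ gives quick blow-up whenever $u$ is large, but it does \emph{not} by itself exclude a solution that hugs $u_\ast$ from above and whose ``overshoot'' drifts off to spatial infinity; ruling this out needs a genuine Liouville/maximum-principle statement for bounded subsolutions of the heat equation on complete manifolds with $Ric\ge0$ that vanish at $t=-\infty$, and this is where the completeness of $\M$ and the asymptotic stability of $u_\ast$ for the reaction term must be used with care. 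Everything else (Steps~1--2 and parts~(2)--(3)) is a direct replay of the reasoning already used for Corollaries~\ref{pliou}--\ref{plioue}.
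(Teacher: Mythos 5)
Your proposal actually does substantially more work than the paper does: the paper ``proves'' Theorem~\ref{xl} only by the one-line remark that it follows from Theorem~\ref{xs} ``as Corollary~\ref{pliou}--Corollary~\ref{plioue}.'' But Corollary~\ref{pliou} only yields, for each $x$, the dichotomy ``$\lim_{t\to-\infty}u(x,t)=u_\ast$ or $u(x,t)=O(e^{ct})$,'' which is strictly weaker than the alternative ``constant or $O(e^{ct})$'' stated in Theorem~\ref{xl}(1). Upgrading the first horn to ``$u$ is constant'' needs an extra argument that the paper never spells out, and you correctly isolate this as the nontrivial point. Your steps for parts (2) and (3) are a faithful replay of Corollaries~\ref{pliouf} and \ref{plioue} and are fine; the only remark there is that for $R\to\infty$ one translates in time rather than sending $t_2-t_1\to\infty$ for an ancient solution (it is only defined on $(-\infty,0)$).

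The gap you leave open in part~(1) is in fact already closed by the very Omori--Yau step you set up, except for a sign slip: Omori--Yau applied to the $C^2$, bounded-above function $u(\cdot,t_1)$ on the complete manifold with $Ric\ge 0$ produces a sequence $x_k$ with $u(x_k,t_1)\to M:=\sup_\M u(\cdot,t_1)$ and $\limsup_k\Delta u(x_k,t_1)\le 0$; it says something about the \emph{supremum}, not (as you wrote) the infimum. Since
\[
\Delta u(x_k,t_1)=u_t(x_k,t_1)+\bigl|au+bu^p\bigr|(x_k,t_1)\ \ge\ |b|\,u(x_k,t_1)\bigl(u(x_k,t_1)^{p-1}-u_\ast^{p-1}\bigr)\ \longrightarrow\ |b|\,M\,(M^{p-1}-u_\ast^{p-1}),
\]
letting $k\to\infty$ forces $M\le u_\ast$; combined with $u\ge u_\ast$ this gives $u(\cdot,t_1)\equiv u_\ast$ for every $t_1<0$, i.e.\ $u$ is constant. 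So the concluding ``Liouville-type estimate for bounded subsolutions of the heat equation on $\M\times(-\infty,t_0]$'' is not needed at all, and the paragraph in which you flag that as the main remaining obstacle can simply be deleted. The finiteness $\sup_\M u(\cdot,t_1)<\infty$ you derive from forward ODE blow-up is exactly what makes Omori--Yau applicable, so that part of your outline is essential and correct. The one remaining loose end worth tightening is your passage from the pointwise dichotomy for $L(x)=\lim_{t\to-\infty}u(x,t)$ to the global alternative ``$L\equiv 0$ or $L\equiv u_\ast$'': since $L$ is only an upper semicontinuous infimum a priori, you do need the interior parabolic estimates (as you note) to conclude $L$ is a $C^2$ solution of the elliptic equation, hence continuous, so that $\{L=0\}$ and $\{L=u_\ast\}$ are both closed and connectedness of $\M$ forces one of them to be all of $\M$.
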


	\begin{rmk}
		\rm
		 For the case $\delta>0$, we introduce the artificial number $L(m,p,a)$ based on the restrictions of our technique. It is natural to inquire whether this lower bound restriction on the Ricci curvature is necessary. If it is indeed necessary, can one improve it to an optimal bound?
	\end{rmk}

	Actually, the restriction on the lower bound of Ricci curvature is not necessary if we consider the corresponding elliptic equation with $a>0$ and $b<0$. However, it is important to note that the constant $C$ will still depend on the lower bound $-K$ of the Ricci curvature.
	
	\begin{thm}\label{dge}
		Let $(\M,g)$ be an n-dimensional Riemannian manifold with $Ric_V^m\ge-Kg$, where $K\ge 0$ and $m>n$. If $u$ is a positive solution of elliptic equation of \eqref{KPP} on $B(x_0,2R)$ with $a>0$, $b<0$ and $p>1$, then 
		\begin{eqnarray}
			\sup\limits_{B(x_0,R)}\frac{|\nabla u|^2}{u^2}+a+bu^{p-1}\le C\left(\frac{1}{R^2}+K\right),
		\end{eqnarray}
		 where $C=C(m,p,a,K)$.
	\end{thm}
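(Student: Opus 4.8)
The plan is to prove this as a local Li--Yau type pointwise estimate, extracted from the Bochner formula by a cut-off maximum-principle argument. Set $h=\log u$. Dividing the elliptic equation $\Delta_V u+au+bu^p=0$ by $u$ and using $\Delta_V\log u=\frac{\Delta_V u}{u}-|\nabla h|^2$, one gets $\Delta_V h+|\nabla h|^2+a+bu^{p-1}=0$; in particular the quantity to be estimated is exactly
\[
  F:=\frac{|\nabla u|^2}{u^2}+a+bu^{p-1}=|\nabla h|^2+a+bu^{p-1}=-\Delta_V h ,
\]
so it suffices to show $F\le C\big(\tfrac1{R^2}+K\big)$ on $B(x_0,R)$.

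First I would apply the weighted Bochner formula $\tfrac12\Delta_V|\nabla h|^2=|\mathrm{Hess}\,h|^2+\langle\nabla h,\nabla\Delta_V h\rangle+Ric_V(\nabla h,\nabla h)$, combine the hypothesis $Ric_V^m\ge-Kg$ with the $m$--dimensional refinement $|\mathrm{Hess}\,h|^2+\frac{\langle V,\nabla h\rangle^2}{m-n}\ge\frac{(\Delta_V h)^2}{m}=\frac{F^2}{m}$, and use $\nabla\Delta_V h=-\nabla F$, to obtain $\tfrac12\Delta_V|\nabla h|^2\ge\tfrac1m F^2-\langle\nabla h,\nabla F\rangle-K|\nabla h|^2$. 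Since $\nabla(u^{p-1})=(p-1)u^{p-1}\nabla h$, a direct computation using the equation gives
\[
  \Delta_V(u^{p-1})=(p-1)u^{p-1}\big[(p-1)|\nabla h|^2+\Delta_V h\big]=(p-1)u^{p-1}\big[(p-2)F-(p-1)a-(p-1)bu^{p-1}\big].
\]
Writing $|\nabla h|^2=F-a-bu^{p-1}$ and adding these relations, one arrives at a differential inequality of the shape
\[
  \tfrac12\Delta_V F+\langle\nabla h,\nabla F\rangle\ \ge\ \tfrac1m F^2+\mathcal Q\big(|\nabla h|^2,\,a+bu^{p-1}\big)-c_1|\nabla h|^2-c_2 ,
\]
where $\mathcal Q$ is an explicit quadratic form in $|\nabla h|^2$ and $a+bu^{p-1}$ — the coefficient of $(a+bu^{p-1})^2$ turning out to be $\tfrac1m-\tfrac{p-1}{2}$ — and $c_1,c_2$ depend only on $m,p,a,K$; the dissipative sign $b<0$ is exactly what renders the relevant coefficients of $\mathcal Q$ favourable.

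Then I would localize. Choose $\varphi\in C_c^\infty(B(x_0,2R))$ with $\varphi\equiv1$ on $B(x_0,R)$, $|\nabla\varphi|^2\le C\varphi/R^2$ and — by the Wei--Wylie weighted Laplacian comparison theorem, valid under $Ric_V^m\ge-Kg$ — $\Delta_V\varphi\ge-C(1+\sqrt KR)/R^2$. At an interior maximum $x^\star$ of $\varphi F$ over $\overline{B(x_0,2R)}$ one may assume $F(x^\star)>0$ (otherwise $\varphi F\le0$ everywhere, hence $F\le0$ on $B(x_0,R)$ and we are done); there $\nabla(\varphi F)=0$ and $\Delta_V(\varphi F)\le0$, so $\nabla F=-\frac F\varphi\nabla\varphi$ and $\varphi\Delta_V F\le\frac{2F|\nabla\varphi|^2}{\varphi}-F\Delta_V\varphi$. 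Feeding this into the differential inequality and using Young's inequality to absorb the gradient term and the lower-order contributions into $\tfrac1m F^2$, one reaches at $x^\star$ a quadratic inequality $\tfrac1m(\varphi F)^2\le C\big(\tfrac1{R^2}+K\big)(\varphi F)+C\big(\tfrac1{R^2}+K\big)^2$, whence $(\varphi F)(x^\star)\le C\big(\tfrac1{R^2}+K\big)$ and therefore $F\le C\big(\tfrac1{R^2}+K\big)$ on $B(x_0,R)$.

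The step I expect to be the main obstacle is controlling the nonlinear term, precisely in the regime where $u$ is large at or near the maximizing point: there $a+bu^{p-1}$ is very negative while $|\nabla\log u|$ may be large, and $\mathcal Q$ fails to be coercive once $p\ge1+\tfrac2m$ (the coefficient $\tfrac1m-\tfrac{p-1}{2}$ changes sign precisely at the threshold $p=1+\tfrac2m$ that also governs $L(m,p,a)$ in Theorem~\ref{dg}). This is where $b<0$ is essential: the equation obeys a Keller--Osserman-type barrier, $\Delta_V u\gtrsim|b|u^{p}$ wherever $u$ is large, which forces both $u$ and $|\nabla\log u|$ to be controlled by the distance to $\partial B(x_0,2R)$, and in the cut-off scheme this enters through the vanishing of $\varphi$ near the boundary; making this bookkeeping quantitative — checking for every $p>1$ that the coefficient of $F^2$ remains strictly positive after all substitutions and absorptions — is the technical heart. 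Unlike Theorem~\ref{dg}, no smallness assumption $K<L(m,p,a)$ is needed here: in the elliptic setting there is no ``$1/t$ budget'' available to absorb the curvature term, so one simply allows $C$ to depend on $K$ (and on $a$), which is why the constant in the statement carries that dependence.
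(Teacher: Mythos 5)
Your computation up to the point where you derive the differential inequality for $F$ is on the right track, and you have correctly located the genuine obstruction: with your choice $F=|\nabla h|^2+a+bu^{p-1}=-\Delta_Vh$, i.e.\ $\gamma=\delta=1$ in the paper's notation, the Bochner-type computation leaves behind a term proportional to $b(p-1)^2u^{p-1}|\nabla h|^2$, which is negative and cannot be absorbed by $\frac1m F^2$ once $p$ passes the threshold $1+\frac2m$. However, the Keller--Osserman barrier you invoke to close this gap is only a heuristic in your write-up, and it is not how the paper proceeds; as written, your argument does not actually establish the estimate for all $p>1$.

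The paper avoids the obstruction in two ways that your proposal misses. First, the auxiliary function is \emph{not} $-\Delta_V h$: one takes $F=\gamma|\nabla h|^2+\delta(a+bu^{p-1})$ with $\gamma=\tfrac23$ and $\delta=\min\bigl(\tfrac12,\tfrac{2}{3p}\bigr)$, and these specific (non-unit, $p$-dependent) coefficients make the coefficient of the dangerous mixed term $bu^{p-1}|\nabla h|^2$ and the coefficient of $(a+bu^{p-1})^2$ both have the right sign for every $p>1$; your $\gamma=\delta=1$ does not have this property. Second, and more importantly, the proof is a \emph{bootstrap}: before running the maximum principle on the new $F$, the paper invokes Theorem \ref{dl} (applied with $\delta<0$ and $t\to\infty$, which is permissible in the elliptic setting) to obtain the a priori one-sided bound
\[
  a+bu^{p-1}\ \ge\ -\,C(m,p,a,K)\,\frac{1+\sqrt K\,R}{R^2}\qquad\text{on }B(x_0,R),
\]
i.e.\ a universal \emph{upper} bound on $u$. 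This bound is then substituted back into the Bochner inequality to dominate the remaining curvature term $-2\gamma K|\nabla h|^2$ by $-2KF$ plus an $O\bigl(K(1+\sqrt KR)/R^2\bigr)$ error, after which the cut-off maximum-principle argument closes as you expect, giving a quadratic inequality for $\varphi F$ at the interior maximum. Without this prior input from Theorem \ref{dl} there is no control on the large-$u$ regime, which is exactly where your $\mathcal Q$ loses coercivity. So the missing idea is not a Keller--Osserman construction but the use of the parabolic (or rather its $t\to\infty$ elliptic limit) differential Harnack inequality of Theorem \ref{dl} as an ingredient, together with a more careful choice of the pair $(\gamma,\delta)$.

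Finally, a smaller point: once the bound $\gamma|\nabla h|^2+\delta(a+bu^{p-1})\le C(K+1/R^2)$ and the lower bound $a+bu^{p-1}\ge -C(K+1/R^2)$ are both in hand, the statement with unit coefficients follows, because $a+bu^{p-1}\le a$ trivially and the two displayed estimates together bound $|\nabla h|^2$; this last bookkeeping step should also be made explicit.
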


	Combining Theorem \ref{dl} and Theorem \ref{dge}, we have the following logarithmic gradient estimate and uniform bound estimate for elliptic equation of \eqref{KPP}. 
	
	\begin{thm}\label{e}
		Let $(\M,g)$ be an n-dimensional Riemannian manifold with $Ric_V^m\ge-Kg$, where $K\ge 0$ and $m>n$. If $u$ is a positive solution of elliptic equation of \eqref{KPP} on $B(x_0,2R)$ with $a>0$, $b<0$ and $p>1$, then 
		\begin{eqnarray}\label{113}
			\sup\limits_{B(x_0,R)}\frac{|\nabla u|^2}{u^2}+|a+bu^{p-1}|	\le C\left(\frac{1}{R^2}+K\right),
		\end{eqnarray}
		where $C=C(m,p,a,K)$.
	\end{thm}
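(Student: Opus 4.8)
The plan is to derive Theorem~\ref{e} by playing off against each other the two estimates already established for the stationary problem: the parabolic differential Harnack inequality of Theorem~\ref{dl}, specialised to a time-independent solution, and the one-sided elliptic estimate of Theorem~\ref{dge}. The point is that Theorem~\ref{dge} controls $\tfrac{|\nabla u|^2}{u^2}+a+bu^{p-1}$ from above, hence in particular $a+bu^{p-1}$ from above, while Theorem~\ref{dl} (with its negative parameter $\delta$) supplies exactly the complementary lower bound on $a+bu^{p-1}$; together these pin down $|a+bu^{p-1}|$, and then the gradient term follows.

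First I would observe that if $u>0$ solves the elliptic equation $\Delta_V u+au+bu^p=0$ on $B(x_0,2R)$, then, viewed as independent of $t$, $u$ is a positive solution of \eqref{KPP} on $B(x_0,2R)\times[0,T]$ for every $T>0$, with $u_t\equiv 0$. Theorem~\ref{dl} then gives, for each $x\in B(x_0,R)$ and each $t\in(0,T]$,
\[
\g\frac{|\nabla u|^2}{u^2}(x)+\delta\bigl(a+bu^{p-1}\bigr)(x)\le C\left(\frac1t+\frac{1+\sqrt{K}R}{R^2}\right),
\]
with $\g=\g(m,p,a,K)>0$, $\delta=\delta(m,a,K)<0$ and $C=C(m,p,a,K)$. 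Since the constants do not depend on $T$, letting $t\to\infty$ removes the $1/t$ term; using $\g\tfrac{|\nabla u|^2}{u^2}\ge0$ and $\delta<0$ then yields the lower bound
\[
a+bu^{p-1}\ge-\frac{C}{|\delta|}\cdot\frac{1+\sqrt{K}R}{R^2}\qquad\text{on }B(x_0,R),
\]
which is precisely the control on $|a+bu^{p-1}|$ in the region where $a+bu^{p-1}<0$.

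Next I would invoke Theorem~\ref{dge}: $\sup_{B(x_0,R)}\bigl(\tfrac{|\nabla u|^2}{u^2}+a+bu^{p-1}\bigr)\le C(R^{-2}+K)$; dropping the nonnegative gradient term gives the matching upper bound $a+bu^{p-1}\le C(R^{-2}+K)$ on $B(x_0,R)$. Using the elementary inequality $\sqrt{K}/R\le\tfrac12(K+R^{-2})$ to bound $(1+\sqrt{K}R)/R^2$ by a multiple of $R^{-2}+K$, the two one-sided bounds combine to $|a+bu^{p-1}|\le C(m,p,a,K)(R^{-2}+K)$ on $B(x_0,R)$. Finally, feeding this back into Theorem~\ref{dge} in the form $\tfrac{|\nabla u|^2}{u^2}\le C(R^{-2}+K)-(a+bu^{p-1})\le C(R^{-2}+K)+|a+bu^{p-1}|$ bounds the gradient term as well, and adding the two bounds yields \eqref{113}.

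I do not expect a genuine obstacle here: once Theorems~\ref{dl} and~\ref{dge} are in hand the argument is a short sign-chase. The only points needing attention are that $\delta<0$, so that Theorem~\ref{dl} truly contributes the one-sided bound that Theorem~\ref{dge} is missing rather than duplicating it; that the stationary extension of $u$ is an admissible solution on arbitrarily long time intervals, so that sending $t\to\infty$ is legitimate and the constant stays independent of $T$; and the bookkeeping that splits $|a+bu^{p-1}|$ into its two sign regimes together with the comparison of $\sqrt{K}/R$ with $R^{-2}+K$.
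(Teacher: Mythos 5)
Your proof is correct and it is exactly the route the paper has in mind: the paper does not write out a separate proof of Theorem~\ref{e} but introduces it with the words ``Combining Theorem~\ref{dl} and Theorem~\ref{dge},'' and the subsequent remark makes explicit that Theorem~\ref{dl} supplies the universal lower bound on $a+bu^{p-1}$ (via the stationary extension and $t\to\infty$) while Theorem~\ref{dge} supplies the complementary upper bound together with the gradient control. Your sign-chase through the two regimes of $a+bu^{p-1}$, plus the elementary estimate $\sqrt{K}/R\le\tfrac12(K+R^{-2})$ to reconcile the two forms of the right-hand side, fills in precisely the bookkeeping the paper leaves implicit.
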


	\begin{rmk}
		\rm (1) Here, we point out that Theorem \ref{dl} first provides a universal upper bound for any positive solutions for the elliptic equation of \eqref{KPP}, which is crucial in deriving the universal lower bound of solutions in Theorem \ref{dge} (and so Theorem \ref{e}). A natural question is that whether the constant $C$ in \eqref{113} can be independent about $K$. Notice that if we only consider the logarithmic gradient estimate of elliptic equation of \eqref{KPP}, we have proved this in \cite{LU3}. A more systemic research \cite{LU4} provides logarithmic gradient estimate and universal bounds for semilinear elliptic equations with Sobolev subcritical index on general Riemannian manifolds. Theoren \ref{dl0e} and Theorem \ref{e} can be seen as  complements for \cite{LU4}\footnote{Concretely, in \cite{LU4}, we consider the  general nonlinear item $f(u)$ satisfying $\Lambda=\sup\limits_{(0,\infty)}tf'(t)/f(t)<p_S(n)$, where $f(t)>0$ and $p_S(n)=\infty$ for $n\in[1,2]$ and $p_S(n)=\frac{n+2}{n-2}$ for $n\in(2,\infty)$. Then under some additional assumptions for nonlinear item, we establish logarithmic gradient estimate and universal bounds for such semilinear elliptic equations on Riemannian manifolds with Ricci curvature bounded below.}.\\
		(2) When we consider equation \eqref{KPP} on Euclidean spaces, more precise asymptotic estimates can be obtained if we restrict the form of solutions. Indeed, there have been classical works \cite{AW,BN,HN,HN2} that provide more precise estimates under certain assumptions on the solutions.\\
		(3) Before our current considerations, Cao-Liu-Pendleton-Ward \cite{C} derived differential Harnack inequalities for equation \eqref{KPP} with additional conditions $u\in [0,1]$ and $Ric\geq 0$. However, their inequalities can not yield the Liouville properties as presented here. On the other hand, in \cite{LU1}, we derived differential Harnack inequalities for general semilinear parabolic equations, but those estimates also need an upper bound condition and  cannot yield the Liouville property for equation \eqref{KPP}. It is worth noting that the main applications of our general framework in \cite{LU1} are sharp differential Harnack inequalities for logarithmic nonlinear terms and sublinear polynomial-type nonlinear terms, which are distinct from the Fisher-KPP type nonlinear term.
		
	\end{rmk}

	\section{Proofs of main results}
	In this section, we construct the auxiliary functions and use Bernstein-Yau method to prove results in the introduction.

	Let $f=\ln u$, then equation \eqref{KPP} becomes 
	\begin{equation}\label{feq}
		\p f=\d_V f+|\nabla f|^2+\left(a+be^{(p-1)f}\right).
	\end{equation}
	We define 
	\begin{equation}
		\L=\d_V-\p,
	\end{equation}
	and  set the auxiliary function as 
	\begin{equation}\label{F}
		F=\gamma|\nabla f|^2-\p f+\delta\left(a+be^{(p-1)f}\right),
	\end{equation}
	where $\g,\delta\in\mathbb{R}$. Then direct computation provides the following lemma.

	\begin{lem}\label{kl}
		Let $u$ be a positive solution of equation \eqref{KPP} on $B(x_0,R)\times [0,T_0]$, and we set $f=\ln u$ and $F$ is defined by \eqref{F}, then for any $m>n$, we have
		\begin{eqnarray}\label{111}
			\L F&\ge&2\g \left|\nabla^2 f-\frac{\d f}{n}g\right|^2+2\g Ric^m_V\left(\nabla f,\nabla f\right)-2\left\langle\nabla f,\nabla F\right\rangle\nonumber\\
			&&+\frac{4}{m}a\g(1-\g)(1-\delta)|\nabla f|^2+\frac{2}{m}\g(1-\delta)^2\left(a+be^{(p-1)f}\right)^2\nonumber\\
			&&-b\left(\left(\g-\delta\right)\left(p-1\right)-\delta(p-1)^2-\frac{4}{m}\g(1-\g)(1-\delta)\right)e^{(p-1)f}|\nabla f|^2\nonumber\\
			&&+\left(\frac{4}{m}\g(1-\delta)\left(a+be^{(p-1)f}\right)-b(p-1)e^{(p-1)f}\right)F\nonumber\\
			&&+\frac{2}{m}\g(1-\g)^2|\nabla f|^4+\frac{2}{m}\g F^2+\frac{4}{m}\g(1-\g)|\nabla f|^2F
		\end{eqnarray}
		on $B(x_0,R)\times[0,T_0]$.
	\end{lem}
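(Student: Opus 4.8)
The plan is a direct, if lengthy, Bochner-type computation in the spirit of the Bernstein--Yau method, carried out for $f=\ln u$. Abbreviate $w:=a+be^{(p-1)f}=a+bu^{p-1}$, so that \eqref{feq} reads $\L f=\Delta_V f-\p f=-|\nabla f|^2-w$ and the auxiliary function \eqref{F} is $F=\g|\nabla f|^2-\p f+\delta w$. Since the metric and $V$ do not depend on time, $\L$ commutes with $\p$, whence $\L F=\g\,\L|\nabla f|^2-\p(\L f)+\delta\,\L w$. I would evaluate the three summands separately: first the weighted Bochner formula
\[
\L|\nabla f|^2=2|\nabla^2 f|^2+2\langle\nabla f,\nabla\L f\rangle+2Ric_V(\nabla f,\nabla f),
\]
combined with $\nabla\L f=-\nabla|\nabla f|^2-b(p-1)e^{(p-1)f}\nabla f$; second $\p(\L f)=-2\langle\nabla f,\nabla\p f\rangle-b(p-1)e^{(p-1)f}\p f$; and third, by the chain rule, $\L w=b(p-1)e^{(p-1)f}\L f+b(p-1)^2e^{(p-1)f}|\nabla f|^2$.

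The second step removes every bare derivative of $|\nabla f|^2$ in favour of $F$. From the definition of $F$, $\p f=\g|\nabla f|^2-F+\delta w$, so $\nabla\p f=\g\nabla|\nabla f|^2-\nabla F+\delta b(p-1)e^{(p-1)f}\nabla f$. Substituting this and the formulas above into $\L F$ and adding, the two occurrences of $\langle\nabla f,\nabla|\nabla f|^2\rangle$ (one from the Bochner term, one from $\nabla\p f$) cancel and leave a single $-2\langle\nabla f,\nabla F\rangle$; substituting $\p f=\g|\nabla f|^2-F+\delta w$ also into $b(p-1)e^{(p-1)f}\p f$, the pure $w$-terms cancel against those from $\L w$, and one obtains the intermediate identity
\[
\L F=2\g|\nabla^2 f|^2+2\g Ric_V(\nabla f,\nabla f)-2\langle\nabla f,\nabla F\rangle+b(p-1)(\delta p-\g)e^{(p-1)f}|\nabla f|^2-b(p-1)e^{(p-1)f}F.
\]

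To produce the quadratic-in-$F$ terms of \eqref{111}, I would then invoke the refined Bochner inequality. Combining the identity $|\nabla^2 f|^2=\big|\nabla^2 f-\tfrac{\Delta f}{n}g\big|^2+\tfrac1n(\Delta f)^2$, the splitting $Ric_V=Ric^m_V+\tfrac{1}{m-n}V^{\flat}\otimes V^{\flat}$, and the elementary bound $\tfrac{x^2}{n}+\tfrac{y^2}{m-n}\ge\tfrac{(x+y)^2}{m}$ with $x=\Delta f$ and $y=\langle V,\nabla f\rangle$ (so $x+y=\Delta_V f$), one gets, using $\g\ge0$,
\[
2\g|\nabla^2 f|^2+2\g Ric_V(\nabla f,\nabla f)\ \ge\ 2\g\Big|\nabla^2 f-\tfrac{\Delta f}{n}g\Big|^2+2\g Ric^m_V(\nabla f,\nabla f)+\tfrac{2\g}{m}(\Delta_V f)^2.
\]
Finally I would insert $\Delta_V f=\p f-|\nabla f|^2-w=(\g-1)|\nabla f|^2-F+(\delta-1)w$ into $\tfrac{2\g}{m}(\Delta_V f)^2$, expand the square, write $w=a+be^{(p-1)f}$ in the cross terms $|\nabla f|^2w$ and $wF$, and merge the result with the $b$-terms already present in the intermediate identity; this yields precisely the $|\nabla f|^4$, $F^2$, $(a+be^{(p-1)f})^2$, $|\nabla f|^2F$, $|\nabla f|^2$ and the two regrouped $e^{(p-1)f}$ terms on the right-hand side of \eqref{111}. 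The one genuinely delicate point is the arithmetic bookkeeping in this last step — keeping the signs straight (recall $b\le0$) and combining the $e^{(p-1)f}|\nabla f|^2$ and $e^{(p-1)f}F$ contributions so they match the stated grouping; everything else is routine Bochner calculus.
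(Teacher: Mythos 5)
Your proposal is correct and follows essentially the same route as the paper's proof: weighted Bochner for $\L|\nabla f|^2$, chain rule for $\p(\L f)$ and $\L w$, absorption of the gradient cross-terms into $-2\langle\nabla f,\nabla F\rangle$, the refined trace bound passing from $\frac1n(\Delta f)^2$ to $\frac1m(\Delta_V f)^2$ while trading $Ric_V$ for $Ric_V^m$, and finally the substitution $\Delta_V f=(\g-1)|\nabla f|^2-F+(\delta-1)w$ followed by expansion. Your intermediate identity (the coefficient $b(p-1)(\delta p-\g)$ on $e^{(p-1)f}|\nabla f|^2$) matches the paper's (2.10), since $(p-1)(\delta p-\g)=(\delta-\g)(p-1)+\delta(p-1)^2$, and the final regrouping produces \eqref{111} term for term.
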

	
	\begin{proof}
		We rewrite \eqref{feq} as 
		\begin{equation}\label{f}
			\L f=-|\nabla f|^2-a-be^{(p-1)f}.
		\end{equation}
		Using chain rule, we  see 
		\begin{equation}\label{s}
			\L F=\g\L|\nabla f|^2-(\L f)_t+\delta\L (a+be^{(p-1)f}).
		\end{equation}
		Now, we compute each item respectively.
			By Bochner's formula, we have
		\begin{eqnarray}\label{B}
			\L |\nabla f|^2&=&2|\nabla^2 f|^2+2Ric_V(\nabla f,\nabla f)+2\left\langle\nabla\L f,\nabla f\right\rangle\nonumber\\
			&=&2|\nabla^2 f|^2+2Ric_V(\nabla f,\nabla f)-2\left\langle\nabla|\nabla f|^2,\nabla f\right\rangle\nonumber\\
			&&-2b(p-1)e^{(p-1)f}|\nabla f|^2.
		\end{eqnarray}
		Chain rule directly derives
		\begin{eqnarray}\label{2g}
			\begin{cases}
				(\L f)_t= -2\left\langle\nabla f,\nabla f_t\right\rangle-b(p-1)e^{(p-1)f}f_t,\\
				\L (a+be^{(p-1)f})=b(p-1)e^{(p-1)f}\L f+b(p-1)^2e^{(p-1)f}|\nabla f|^2.
			\end{cases}
		\end{eqnarray}
		We also notice that 
		\begin{equation}\label{fF}
			\left\langle\nabla f,\nabla F\right\rangle=\g\left\langle\nabla f,\nabla |\nabla f|^2\right\rangle-\left\langle\nabla f,\nabla f_t\right\rangle+\delta b(p-1)e^{(p-1)f}|\nabla f|^2.
		\end{equation}
		Substituting \eqref{B} and \eqref{2g} into \eqref{s}, and using the equation \eqref{f}, \eqref{fF} and definition of $F$, we obtain
		\begin{eqnarray}\label{210}
			\L F&=&2\g|\nabla^2 f|^2+2\g Ric_V(\nabla f,\nabla f)-2\left\langle\nabla f,\nabla F\right\rangle\nonumber\\
			&&+\left(b(\delta-\g)(p-1)e^{(p-1)f}+b\delta(p-1)^2e^{(p-1)f}\right)|\nabla f|^2\nonumber\\
			&&-b(p-1)e^{(p-1)f}F.
		\end{eqnarray}
		Last, we observe  the following inequality ($m>n$)
		\begin{eqnarray}\label{id}
			\left|\nabla^2 f\right|^2&=&\left|\nabla^2 f-\frac{\d f}{n}g\right|^2+\frac{1}{n}(\d f)^2\nonumber\\
		&=&\left|\nabla^2 f-\frac{\d f}{n}g\right|^2+\frac{1}{n}(\d_V f-Vf)^2\nonumber\\
			&\ge&\left|\nabla^2 f-\frac{\d f}{n}g\right|^2+\frac{1}{m}(\d_V f)^2-\frac{1}{m-n}(Vf)^2
		\end{eqnarray}
		and the equivalent form of definition $F$
		\begin{equation}\label{eF}
			\d_V f=(\g-1)|\nabla f|^2-F+(\delta-1)(a+be^{(p-1)f}).
		\end{equation}
		Substitute \eqref{id} and \eqref{eF} into \eqref{210}, we will derive the desired formula.

	\end{proof}

	From Bakry-Qian's Laplacian comparison theorem \cite[Theorem 4.2]{BQ},  using the standard construction argument, we have the following  cut-off function under Bakry-\'{E}mery curvature condition.

	\begin{lem}\label{cut}
		Let $(\M,g)$ be an $n$-dimensional Riemannian manifold with $Ric_V^m\ge-Kg$, where $V$ is a smooth vector field, $m>n$ and $K\ge 0$. Then for any $R>0$, we have a cut-off function $\Phi(x)\in {\rm Lip}(B(x_0,2R))$ such that\vspace{2mm}\\
		\rm(i) $\Phi(x)=\phi(d(x_0,x))$, where  $d(x_0,\cdot)$ is the distance function from $x_0$ and  $\phi$ is a non-increasing function on $[0,\infty)$ and
		\begin{eqnarray}
			\Phi(x)=
			\begin{cases}
				1\qquad\qquad\qquad\,\text{if}\qquad x\in B(x_0,R)\nonumber\\
				0\qquad\qquad\qquad\,\text{if}\qquad x\in B(x_0,2R)\setminus B(x_0,\frac{3}{2}R).
			\end{cases}
		\end{eqnarray}
		(ii) 
		\begin{equation}
			\frac{|\nabla \Phi|}{\Phi^{\frac{1}{2}}}\le\frac{C}{R}.\nonumber
		\end{equation}
		(iii)
		\begin{equation}
			\Delta \Phi\ge-\frac{C}{R}\cdot \sqrt{mK}\coth\left(R\cdot \sqrt{\frac{K}{m}}\right)-\frac{C}{R^2}\ge-C(m)\frac{1+\sqrt{K}R}{R^2}\nonumber
		\end{equation}
		holds on $B(x_0,2R)$ in the distribution sense and pointwise outside cut locus of $x_0$. Here, $C$ is a universal constant.
	\end{lem}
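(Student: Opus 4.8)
The plan is to carry out the standard Calabi--Yau cut-off construction adapted to the Bakry--\'{E}mery setting: fix a one-dimensional model profile, compose it with the distance function $d(x_0,\cdot)$, and feed the upper bound on the (weighted) Laplacian of the distance function provided by Bakry--Qian's comparison theorem into the bound (iii).

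First I would fix, once and for all, a $C^2$ non-increasing function $\psi\colon[0,\infty)\to[0,1]$ with $\psi\equiv1$ on $[0,1]$, $\psi\equiv0$ on $[\tfrac32,\infty)$, and
\[
-C_0\,\psi^{1/2}\le\psi'\le0,\qquad \psi''\ge-C_0
\]
for some absolute constant $C_0$; such a $\psi$ can be written down explicitly. Setting $\phi(r):=\psi(r/R)$ and $\Phi(x):=\phi\bigl(d(x_0,x)\bigr)$, property (i) is immediate, and $\Phi\in{\rm Lip}(B(x_0,2R))$ because $d(x_0,\cdot)$ is $1$-Lipschitz and $\psi$ is $C^1$; in fact $\Phi$ is smooth outside $\{x_0\}\cup{\rm Cut}(x_0)$, a set of full measure. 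At such points $|\nabla d|=1$, so $|\nabla\Phi|=|\phi'(d)|=R^{-1}|\psi'(d/R)|$, whence $|\nabla\Phi|\,\Phi^{-1/2}=R^{-1}|\psi'(d/R)|\,\psi(d/R)^{-1/2}\le C_0/R$, and (ii) follows since $\Phi$ is locally Lipschitz.

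For (iii), at a smooth point $x\notin\{x_0\}\cup{\rm Cut}(x_0)$ the chain rule gives
\[
\Delta_V\Phi=\phi''(d)\,|\nabla d|^2+\phi'(d)\,\Delta_V d=R^{-2}\psi''(d/R)+R^{-1}\psi'(d/R)\,\Delta_V d,
\]
whose first term is $\ge-C_0/R^2$. Bakry--Qian's comparison theorem \cite[Theorem 4.2]{BQ} under $Ric_V^m\ge-Kg$ gives $\Delta_V d\le\sqrt{mK}\coth\bigl(d\sqrt{K/m}\bigr)$; since $\psi'$ is supported where $d\ge R$, there $\Delta_V d\le\sqrt{mK}\coth\bigl(R\sqrt{K/m}\bigr)$ (the right-hand side being decreasing in $d$), and as $-C_0\le\psi'\le0$ this yields $R^{-1}\psi'(d/R)\,\Delta_V d\ge-\tfrac{C_0}{R}\sqrt{mK}\coth\bigl(R\sqrt{K/m}\bigr)$ (trivially where $\Delta_V d\le0$). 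Hence the first inequality in (iii) holds pointwise off the cut locus, and the second, cleaner bound $-C(m)(1+\sqrt K R)/R^2$ follows from the elementary estimate $t\coth t\le1+t$ for $t\ge0$ applied with $t=R\sqrt{K/m}$.

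It remains to promote this to the distributional statement on $B(x_0,2R)$, and this is the one genuinely delicate point. Here I would invoke the standard Calabi argument: at a cut point $x_1$, pushing the base point a distance $\varepsilon$ along a minimizing geodesic $\sigma$ from $x_0$ to $x_1$ produces a function $d_\varepsilon:=\varepsilon+d(\sigma(\varepsilon),\cdot)$, smooth near $x_1$, with $d_\varepsilon\ge d$, $d_\varepsilon(x_1)=d(x_1)$, and $\Delta_V d_\varepsilon(x_1)\le\sqrt{mK}\coth\bigl((d(x_1)-\varepsilon)\sqrt{K/m}\bigr)$ by Bakry--Qian at $\sigma(\varepsilon)$; since $\phi$ is non-increasing, $\phi\circ d_\varepsilon$ is then a smooth lower barrier for $\Phi$ at $x_1$ satisfying the same Laplacian lower bound up to an error that vanishes as $\varepsilon\downarrow0$, and a function admitting such barriers everywhere satisfies the corresponding inequality in the sense of distributions (near $x_0$ itself $\Phi\equiv1$, so there is nothing to check). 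The crucial structural point is that $\phi'\le0$, so that the negative singular part of $\Delta_V d$ across the cut locus contributes with the favorable sign; apart from this the argument is a routine computation once the profile $\psi$ is fixed.
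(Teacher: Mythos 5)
Your argument is correct and is precisely the ``standard construction argument'' the paper invokes in a single line: a fixed $C^2$ profile $\psi$ composed with the distance function, the chain rule, Bakry--Qian's Laplacian comparison for $\Delta_V d$ under $Ric_V^m\ge-Kg$, the elementary bound $t\coth t\le 1+t$, and Calabi's upper/lower barrier trick to pass across the cut locus. The only discrepancy is notational: item (iii) as printed writes $\Delta\Phi$, but what Bakry--Qian controls and what is actually used downstream (where the relevant operator is $\Delta_V-\partial_t$) is $\Delta_V\Phi$; your computation correctly targets $\Delta_V\Phi$, which is evidently the intended reading.
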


	Using Lemma \ref{kl}, we provide a proof of Theorem \ref{dl}, which serves as a standard model for proving the subsequent results.

	\begin{proof}[Proof of Theorem \ref{dl}]
		If $Ric^m_V\ge-Kg$, $a>0,b\le0$ and $p>1$, then we choose $\g=\g(m,p,a,K)\in(0,1)$ and $\delta=\delta(m,a,K)<0$ such that 
		\begin{equation}\label{25}
			\frac{2a}{m}(1-\g)(1-\delta)-K\ge 0,
		\end{equation}
		\begin{equation}\label{26}
			\left(\g-\delta\right)\left(p-1\right)-\delta(p-1)^2-\frac{4}{m}\g(1-\g)(1-\delta)\ge0,
		\end{equation}
		and 
		\begin{equation}\label{27}
			L:=\frac{4}{m}\g(1-\delta)\left(a+be^{(p-1)f}\right)-b(p-1)e^{(p-1)f}\ge0.
		\end{equation}
		Therefore, by Lemma \ref{kl}, substituting \eqref{25}-\eqref{27} into \eqref{111} yields
		\begin{eqnarray}
			\L F&\ge&-2\left\langle\nabla f,\nabla F\right\rangle+LF\nonumber\\
			&&+\frac{2}{m}\g(1-\g)^2|\nabla f|^4+\frac{2}{m}\g F^2+\frac{4}{m}\g(1-\g)|\nabla f|^2F
		\end{eqnarray}
		on $B(x_0,2R)\times[0,T_0]$, where $f,F$ are defined as in Lemma \ref{kl}.
		
		Now, for any $T\in(0,T_0]$,	we define
		\begin{equation}
			G(x,t)=t\P(x)F(x,t)\qquad\text{on $\B2T$},
		\end{equation}
		where $\P(x)$ is an undetermined cut-off function as in Lemma \ref{cut}. Then
		\begin{equation}
			\L G=-\frac{1}{t}G+\frac{\d \P}{\P}G+2t\left\langle\nabla \P,\nabla F\right\rangle+t\P\L F
		\end{equation}
		on $N:=\{(x,t)\in B(x_0,2R)\times[0,T]:G(x,t)>0 \,\,\text{and}\,\,\, x\notin cut(x_0)\}$, where $cut(x_0)$ is the cut locus of $x_0$.
		
		Without loss of generality, we assume that $G(x^{\star},\tx)=\max\limits_{B(x_0,2R)\times[0,T]} G>0$ and the space component of maximum value point $\x$ is outside of cut locus of $x_0$ (see Li-Yau \cite{LY}). At this maximum value point, we see
		\begin{eqnarray}
			0&\ge&-\frac{1}{t}G+\left(\frac{\d \P}{\P}-2\frac{|\nabla\P|^2}{\P^2}\right)G+2\frac{G}{\P}\left\langle\nabla f,\nabla \P\right\rangle\nonumber\\
			&&+\frac{2}{m}\g t\P F^2+\frac{4}{m}\g(1-\g)|\nabla f|^2G.
		\end{eqnarray}
		Cauchy-Schwarz inequality and basic inequality derive
		\begin{equation}
			2\frac{G}{\P}\left\langle\nabla f,\nabla \P\right\rangle\ge-\frac{4}{m}\g(1-\g)|\nabla f|^2G-\frac{m}{4\g(1-\g)}\frac{|\nabla\P|^2}{\P^2}.
		\end{equation}
		So, we have
		\begin{equation}
			0\ge-\P G+\left(\d \P-\left(2+\frac{m}{4\g(1-\g)}\right)\frac{|\nabla\P|^2}{\P}\right)tG+\frac{2}{n}\g G^2.
		\end{equation}
		Using the property of cut-off function in Lemma \ref{cut} and $\tx\le T$, we see 
		\begin{equation}
			\max\limits_{B(x_0,2R)\times[0,T]} G=G(\x,\tx)\le C(m,\g)\left(1+\frac{1+\sqrt{K}R}{R^2}T\right).
		\end{equation}
		Therefore, we have
		\begin{equation}
			\sup\limits_{B(x_0,R)}F(\cdot,T)\le C(m,p,a,K)\left(\frac{1}{T}+\frac{1+\sqrt{K}R}{R^2}\right).
		\end{equation}
		Because  $T\in(0,T_0]$ is arbitrary, we complete the proof.

	\end{proof}

	Using Theorem \ref{dl}, we readily verify Corollary \ref{c12}.
	
	\begin{proof}[Proof of Corollary \ref{c12}]
		If the solution $u$ defines on $\M\times[0,T_0]$, letting $R\to \infty$ in \eqref{13} yields
		\begin{equation}\label{216}
			\g\frac{|\nabla u|^2}{u^2}-\frac{u_t}{u}+\delta\left(a+bu^{p-1}\right)	\le \frac{C(m,p,a,K)}{t}\quad\text{on $\M\times[0,T_0]$}.
		\end{equation}
		If $u$ is  a positive solution on $\M\times(-\infty,0)$, for any $t_0<0$ and $T>-t_0$, we see $u_T(x,t):=u(x,t-T)$ solves \eqref{KPP} on $\M\times(-\infty,T)$. So, by \eqref{216}, we have 
		\begin{eqnarray}\label{217}
			&&\left(\g\frac{|\nabla u|^2}{u^2}-\frac{u_t}{u}+\delta\left(a+bu^{p-1}\right)\right)(x,t_0)\nonumber\\
			&=&		\left(\g\frac{|\nabla u_T|^2}{u_T^2}-\frac{\p u_T}{u_T}+\delta\left(a+bu_T^{p-1}\right)\right)(x,t_0+T)\nonumber\\
			&\le& \frac{C(m,p,a,K)}{t_0+T}.\nonumber
		\end{eqnarray}
		By letting $T\to\infty$, we see
		\begin{equation}\label{218}
			\delta\left(au+bu^{p}\right)	\le \p u\quad\text{on $\M\times(-\infty,0)$}.
		\end{equation}
		For any $\e>0$, from differential inequality 
		\eqref{218}, we know that there exists a  $\tau=\tau(\e,\delta,b,p)<0$ such that $u(x,t)\le\e+\left(-\frac{a}{b}\right)^{\frac{1}{p-1}}$ when $t\le\tau$. Hence $\limsup\limits_{t\to-\infty}u(x,t)\le\e+\left(-\frac{a}{b}\right)^{\frac{1}{p-1}}$, and then letting $\e\to 0$ yields desired estimate.
		By translating time, we obtain the uniform upper bound for eternal solutions of \eqref{KPP}.
	\end{proof}

	\begin{proof}[Proof of Theorem \ref{dl0}] 
		If $Ric^m_V\ge 0$, $a=0,b\le0$ and $p>1$, then we choose $\g=\g(m,p)\in(0,1)$ and $\delta=-1<0$ such that 
		\begin{equation}\label{260}
			\left(\g-\delta\right)\left(p-1\right)-\delta(p-1)^2-\frac{4}{m}\g(1-\g)(1-\delta)\ge0,
		\end{equation}
		and 
		\begin{equation}\label{270}
			L:=\frac{4}{m}\g(1-\delta)be^{(p-1)f}-b(p-1)e^{(p-1)f}\ge0.
		\end{equation}
		Therefore, by Lemma \ref{kl}, substituting \eqref{260}-\eqref{270} into \eqref{111} yields
		\begin{eqnarray}
			\L F&\ge&-2\left\langle\nabla f,\nabla F\right\rangle+LF\nonumber\\
			&&+\frac{2}{m}\g(1-\g)^2|\nabla f|^4+\frac{2}{m}\g F^2+\frac{4}{m}\g(1-\g)|\nabla f|^2F
		\end{eqnarray}
		on $B(x_0,2R)\times[0,T_0]$.
		
		Then by nearly same arguments as proof of Theorem \ref{dl} (with $K=0$), we will finish the proof of Theorem \ref{dl0}.

	\end{proof}
	
	Notice that the condition ``$Ric^m_V\ge 0$'' is used to remove the item $|\nabla f|^2$ and is not required at elliptic case.
	
	\begin{proof}[Proof of Theorem \ref{dl0e}]
		By letting $\delta=-1$ and $\g\in(0,1)$ as in proof of Theorem \ref{dl0}, we have (notice $\p u=0$ at present case and $F=\g|\nabla f|^2-bu^{p-1}$ is nonnegative)
		\begin{eqnarray}
			\t_V F&\ge&-2\left\langle\nabla f,\nabla F\right\rangle-2KF\nonumber\\
			&&+\frac{2}{m}\g(1-\g)^2|\nabla f|^4+\frac{2}{m}\g F^2+\frac{4}{m}\g(1-\g)|\nabla f|^2F
		\end{eqnarray}
		on $B(x_0,2R)$.
		
		Same cut-off function $\Phi$ is defined as in the proof of Theorem \ref{dl} and the auxiliary function is given by $G(x)=\P(x)F(x)$ on $B(x_0,2R)$. Then nearly same arguments as proof of Theorem \ref{dl} derives desired results and we omit these abundant details.
		
	\end{proof}

	\begin{proof}[Proof of Corollary \ref{cl0}]
		Using Theorem \ref{dl0} and same arguments as proof of Corollary \ref{c12} provide the following inequality for ancient solution of \eqref{KPP} with $a=0,b<0$ and $p>1$: 
		\begin{equation}\label{222}
			\p u\ge -bu^{p} \qquad\text{on $\M\times(-\infty,0)$},
		\end{equation}
		which implies $\d u\ge-2bu^{p}\ge 0$ and so $\t u=0$ ($\M$ is closed). Therefore, $u$ must be trivial and  this is impossible.

	\end{proof}
	
	\begin{proof}[Proof of Corollary \ref{nc0}]
		Using \eqref{222} and ODE comparison theorem, one can derive the decay estimate in Corollary \ref{nc0}.

	\end{proof}
	
	\begin{proof}[Proof of Theorem \ref{dg}]
		(1) If we first choose $\delta=0$, and $\g\in\left(1-\frac{m}{4}(p-1),\frac{1}{2}\right)$ if $p\in(1+\frac{2}{m},1+\frac{4}{m})$ or $\g\in(0,1-\frac{mK}{2a})$ if $p\in[1+\frac{4}{m},\infty)$, because of the condition $K<L(m,p,a)$, then we  clearly see
		\begin{equation}\label{252}
			\frac{2a}{m}(1-\g)-K> 0,
		\end{equation}
		\begin{equation}\label{262}
			\g\left(p-1\right)-\frac{4}{m}\g(1-\g)>0,
		\end{equation}
		and 
		\begin{equation}\label{272}
			L:=\frac{4}{m}\g\left(a+be^{(p-1)f}\right)-b(p-1)e^{(p-1)f}>0,
		\end{equation}
		where $p-1-\frac{4}{m}\g>0$. 
	Therefore, there exists a  $\delta_0=\delta(m,p,a,K)>0$ sufficiently small such that when $\delta\in[-\delta_0,\delta_0]$ \eqref{252}-\eqref{272} also hold (with same $\g$).
	Now, using Lemma \ref{kl}, and  substituting \eqref{252}-\eqref{272} into \eqref{111} yields
		\begin{eqnarray}\label{226}
			\L F&\ge&-2\left\langle\nabla f,\nabla F\right\rangle+LF\nonumber\\
			&&+\frac{2}{m}\g(1-\g)^2|\nabla f|^4+\frac{2}{m}\g F^2+\frac{4}{m}\g(1-\g)|\nabla f|^2F
		\end{eqnarray}
		on $B(x_0,2R)\times[0,T_0]$. Then by same arguments as proof of Theorem \ref{dl}, we can derive the following 
		differential Harnack inequality
		\begin{eqnarray}
			\left(\g\frac{|\nabla u|^2}{u^2}-\frac{u_t}{u}\pm\delta_0\left(a+bu^{p-1}\right)\right)(x,t)	\le C\left(\frac{1}{t}+\frac{1+\sqrt{K}R}{R^2}\right)
		\end{eqnarray}
		on $B(x_0,R)\times[0,T_0]$, where $C=C(m,p,a,K)$. So we finish the proof.\\
		(2) Without loss of generality, we assume $\max(\frac{4}{m},1)<p\le1+\frac{2}{m}$ (so $m>2$) or desired inequality is obtained by (1) (notice that $K=0$ at present case). We choose
		\begin{eqnarray}
			\begin{cases}
				\g_0=1\nonumber\\
				\delta_0=1-\frac{m}{4}(p-1),\nonumber
			\end{cases}
		\end{eqnarray}
		then if $(\g,\delta)=(\g_0,\delta_0)$, we have
		\begin{eqnarray}
			\begin{cases}
				\text{LHS of \eqref{262}}=\frac{m}{4}\left(p-\frac{4}{m}\right)(p-1)^2>0\nonumber\\
				\text{LHS of \eqref{272}}=a(p-1)>0.\nonumber
			\end{cases}
		\end{eqnarray}
		Therefore, if we choose $\g_1=\g_1(m,p,a)\in(\frac{1}{2},1)$ and $\g_1$ is sufficiently close to 1, and define $\delta_1=1-\frac{m}{4\g_1}(p-1)>0$, then we also have
		\begin{eqnarray}
			\begin{cases}
				\text{LHS of \eqref{262}}>0\nonumber\\
				\text{LHS of \eqref{272}}>0.\nonumber
			\end{cases}
		\end{eqnarray}
		By following the proof of Theorem \ref{dl}, we can derive the following differential Harnack inequality
		\begin{eqnarray}
			\left(\g_1\frac{|\nabla u|^2}{u^2}-\frac{u_t}{u}+\delta_1\left(a+bu^{p-1}\right)\right)(x,t)	\le C\left(\frac{1}{t}+\frac{1+\sqrt{K}R}{R^2}\right)
		\end{eqnarray}
		on $B(x_0,R)\times[0,T_0]$, where $C=C(m,p,a)$. Now, we set $\delta_2=-\delta_1$ and choose sufficiently small and positive $\gamma_2$ such that \eqref{25}-\eqref{27} (with $K=0$) hold. So same arguments as proof of Theorem \ref{dl}, we have
		\begin{eqnarray}
			\left(\g_2\frac{|\nabla u|^2}{u^2}-\frac{u_t}{u}-\delta_1\left(a+bu^{p-1}\right)\right)(x,t)	\le C\left(\frac{1}{t}+\frac{1+\sqrt{K}R}{R^2}\right)
		\end{eqnarray}
		on $B(x_0,R)\times[0,T_0]$, where $C=C(m,p,a)$. Then we finish the proof with $\g=\min\left(\g_1,\g_2\right)$ and $\delta=\delta_1$.
	\end{proof}

	\begin{proof}[\text{Proof of Corollary \ref{pliou}}]
		If condition (a) or (b) of Corollary \ref{pliou} holds, then \eqref{19} or \eqref{110} is  valid and we both have (see the standard argument in proof of Corollary \ref{c12})
		\begin{equation}\label{231}
			\p \ln u\ge \delta|a+bu^{p-1}|\ge 0\qquad\text{on $\M\times(-\infty,0)$.}
		\end{equation}
		 By differential inequality \eqref{231}, we have $\limsup\limits_{t\to-\infty}u\le \left(-\frac{a}{b}\right)^{\frac{1}{p-1}}$. Therefore, if (1) is invalid, there exist $t_0<0$ and $l<\left(-\frac{b}{a}\right)^{\frac{1}{p-1}}$ such that $u(x,t)<l$ on $(-\infty,t_0)$ by Corollary \ref{c12} and \eqref{231}. Therefore, $\p u\ge cu$ on $\M\times(-\infty,t_0)$, where $c=c(a,b,\delta,l)>0$. And so we have the decay estimate as desired.

	\end{proof}
	

	\begin{proof}[\text{Proof of Corollary \ref{pliouf}}]
		As proof of Corollary \ref{pliou}, either (a) or (b) holds, we have
		\begin{eqnarray}\label{244}
			\left(\g\frac{|\nabla u|^2}{u^2}-\frac{u_t}{u}+\delta|a+bu^{p-1}|\right)(x,t)	\le \frac{C}{t}\qquad\text{on $\M\times(0,\infty)$}.
		\end{eqnarray}
		Here, $C$, $\g$ and $\delta$ are positive numbers which depend $m,p,a,K$ at most. First, we claim that for any $x\in\M$, $\limsup\limits_{t\to\infty}u(x,t)\le \left(-\frac{a}{b}\right)^{\frac{1}{p-1}}$. If not, there exists a $\e>0$ and a sequence $t_n\to\infty$ such that  $u(x,t_n)>\left(-\frac{a}{b}\right)^{\frac{1}{p-1}}+\e$. So $\exists n_0\in\mathbb{N}$ such that when $n\ge n_0$,
		\begin{eqnarray}\label{2450}
			\frac{u_t}{u}(x,t_n)&\ge& \delta|a+bu^{p-1}|(x,t_n)- \frac{C}{t_n}\nonumber\\
			&\ge& C(\delta,a,b,\e)- \frac{C}{t_n}\nonumber\\
			&\ge& \frac{1}{2}C(\delta,a,b,\e)>0.
		\end{eqnarray}
		Consider the Cauchy problem:
		\begin{eqnarray}\label{ca1}
			\begin{cases}
				w_t=\delta|aw+bw^p|-\frac{Cw}{t}\\
				w(t_{n_0})=u(t_{n_0}),
			\end{cases}
		\end{eqnarray}
		where $C$ is the $C(\delta,a,b,\e)$. By \eqref{2450} and \eqref{ca1}, it is clear that $w$ is increasing on its existence interval and will blow up in finite time after $t_{n_0}$. So ODE comparison theorem tells us that $u(x,\cdot)$ will also blow up, which yields a contradiction.
		
		Similarly, we also claim that for any $x\in\M$, $\liminf\limits_{t\to\infty}u(x,t)\ge \left(-\frac{a}{b}\right)^{\frac{1}{p-1}}$.  If not, there exists a $\e>0$ and a sequence $t_n\to\infty$ such that  $0<u(x,t_n)<\left(-\frac{a}{b}\right)^{\frac{1}{p-1}}-\e$. So $\exists n_0\in\mathbb{N}$ such that when $n\ge n_0$,
		\begin{eqnarray}\label{2460}
			\frac{u_t}{u}(x,t_n)&\ge& \delta|a+bu^{p-1}|(x,t_n)- \frac{C}{t_n}\nonumber\\
			&\ge& C(\delta,a,b,\e)- \frac{C}{t_n}\nonumber\\
			&\ge& \frac{1}{2}C(\delta,a,b,\e)>0.
		\end{eqnarray}
		At present case, we also consider the Cauchy problem \eqref{ca1} and readily see that it has the unique solution in $[t_{n_0},\infty)$ and $\lim\limits_{t\to\infty}w(t)=\left(-\frac{a}{b}\right)^{\frac{1}{p-1}}$. By ODE comparison, $u(x,t)\ge w(t)$ when $t\ge t_0$. Hence it contradicts with the existence of sequence $t_n$.

	\end{proof}
	
	\begin{proof}[\text{Proof of Corollary \ref{plioue}}]
		
		As proof of Corollary \ref{pliou}, either (a) or (b) holds, we have
		\begin{eqnarray}\label{2430}
			u_t\ge \delta|au+bu^{p}|\qquad\text{on $\M\times(-\infty,\infty)$}.
		\end{eqnarray}
		Here, $C$, $\g$ and $\delta$ are positive numbers which depend $m,p,a,K$ at most. By \eqref{2430}, either $u(x,\cdot)\equiv \left(-\frac{a}{b}\right)^{\frac{1}{p-1}}$ on $(t_0,\infty)$ for some $t_0>0$, or $u(x,t)\in(0,\left(-\frac{a}{b}\right)^{\frac{1}{p-1}})$ (notice that we already know $u\le \left(-\frac{a}{b}\right)^{\frac{1}{p-1}}$ in Corollary \ref{c12}). Then using ODE comparison for \eqref{2430} directly derives desired result.

	\end{proof}

	Now, we start to prove Theorem \ref{dge}. The key point of this proof is using the priori estimate in Theorem \ref{dl}.
	
	\begin{proof}[Proof of Theorem \ref{dge}]
		For elliptic case, we see $\p u=0$ and $F=\g|\nabla f|^2+\delta (a+bu^{p-1})$ (where $f=\ln u$), and so we can rewrite Lemma \ref{kl} as follows
		\begin{eqnarray}\label{112}
			\d_V F&\ge&2\g \left|\nabla^2 f-\frac{\d f}{n}g\right|^2+2\g Ric^m_V\left(\nabla f,\nabla f\right)-2\left\langle\nabla f,\nabla F\right\rangle\nonumber\\
			&&+\left(\frac{2}{m}\g(1-\delta)^2-\frac{4}{m}(1-\delta)(1-\g)\delta\right)\left(a+be^{(p-1)f}\right)^2\nonumber\\
			&&+\frac{4}{m}(1-\delta)(a+be^{(p-1)f})F-b(\g-\delta p)(p-1)e^{(p-1)f}|\nabla f|^2\nonumber\\
			&&-b(p-1)e^{(p-1)f}F+\frac{2}{m}\g(1-\g)^2|\nabla f|^4+\frac{2}{m}\g F^2+\frac{4}{m}\g(1-\g)|\nabla f|^2F\nonumber\\
			&&
		\end{eqnarray}
		on $B(x_0,2R)$.
		
		Choose $\g=\frac{2}{3}$ and $\delta=\min(\frac{1}{2},\frac{2}{3p})$, then using $Ric_V^m\ge-Kg$ yields
		\begin{eqnarray}\label{237}
			\d_V F&\ge&-2\g K|\nabla f|^2-2\left\langle\nabla f,\nabla F\right\rangle\nonumber\\
			&&+\frac{4}{m}(1-\delta)(a+be^{(p-1)f})F-b(p-1)e^{(p-1)f}F\nonumber\\
			&&+\frac{2}{m}\g(1-\g)^2|\nabla f|^4+\frac{2}{m}\g F^2+\frac{4}{m}\g(1-\g)|\nabla f|^2F
		\end{eqnarray}
		on $B(x_0,2R)$. 
		
		Now, using Theorem \ref{dl} for elliptic case, we have
		\begin{equation}\label{238}
			a+bu^{p-1}\ge- C(m,p,a,K)\frac{1+\sqrt{K}R}{R^2}
		\end{equation}
		on $B(x_0,R)$. Without loss of generality, we assume that \eqref{238} holds on $B(x_0,2R)$. And so 
		\begin{equation}\label{239}
			-2\g K|\nabla f|^2=2K\left(\delta(a+bu^{p-1})-F\right)\ge-2KF-CK\frac{1+\sqrt{K}R}{R^2}
		\end{equation}
		on $B(x_0,2R)$, where $C=C(m,p,a,K)>0$.
		
		At the following computations, the $C$ always denotes constant which depends on $m,p,a,K$. Substituting \eqref{238} and \eqref{239} to \eqref{237}, we see
		\begin{eqnarray}\label{240}
			\d_V F&\ge&-2KF-C\left(K^2+\frac{K}{R^2}\right)-2\left\langle\nabla f,\nabla F\right\rangle\nonumber\\
			&&-C\left(K+\frac{1}{R^2}\right)F+\frac{2}{m}\g F^2+\frac{4}{m}\g(1-\g)|\nabla f|^2F
		\end{eqnarray}
		on $\{x\in B(x_0,2R):F\ge 0\}$.

		Now, we define 
		\begin{equation}
			A(x)=\Phi(x)F(x)\qquad\text{on $B(x_0,2R)$},
		\end{equation}
		where $\P$ is an undetermined cut-off function as in Lemma \ref{cut}. Then
		\begin{equation}
			\d A=\frac{\d \P}{\P}A+2\left\langle\nabla \P,\nabla F\right\rangle+\P \d F
		\end{equation}
		on $\{x\in B(x_0,2R):A(x)>0\,\,\,\text{and}\,\,x\notin cut(x_0)\}$. 	Without loss of generality, we can assume that $A(\x)=\max\limits_{B(x_0,2R)} A>0$ and the maximum value point $\x$ is outside of cut locus of $x_0$. At this maximum value point , we see
		\begin{eqnarray}\label{243}
			0&\ge&\left(\frac{\d \P}{\P}-2\frac{|\nabla\P|^2}{\P^2}\right)A-2KA-C\left(K^2+\frac{K}{R^2}\right)+2\left\langle\nabla f,\nabla \P\right\rangle F\nonumber\\
			&&-C\left(K+\frac{1}{R^2}\right)A+\frac{2}{m}\g\P F^2+\frac{4}{m}\g(1-\g)|\nabla f|^2A.
		\end{eqnarray}
		Cauchy-Schwarz inequality and basic inequality derive
		\begin{equation}\label{244}
			2\left\langle\nabla f,\nabla \P\right\rangle F(\x)\ge-\frac{4}{m}\g(1-\g)|\nabla f|^2A(\x)-\frac{m}{4\g(1-\g)}\frac{|\nabla\P|^2}{\P^2}(\x).
		\end{equation}
		Substituting \eqref{243} into \eqref{244} and multiplying $\P(\x)$ at both sides, and using the property of $\P$, we have
		\begin{eqnarray}\label{245}
			\frac{2\g}{m}A^2(\x)-C\left(K+\frac{1}{R^2}\right)A(\x)-C\left(K^2+\frac{K}{R^2}\right)\ge 0,
		\end{eqnarray}
		which implies 
		\begin{equation}
			\sup\limits_{B(x_0,R)} F\le A(\x)\le C\left(K+\frac{1}{R^2}\right).
		\end{equation}
		Therefore, we complete the proof.

	\end{proof}

	

\end{document}